\newcommand{\R}{\mathbb{R}}
\newcommand{\Z}{\mathbb{Z}}
\newcommand{\N}{\mathbb{N}}
\newcommand{\K}{\mathcal{K}}
\newcommand{\C}{\mathcal{C}}
\newcommand{\NC}{\mathcal{N}_{\mathcal{C}}}
\renewcommand{\S}{\mathcal{S}}
\newcommand{\V}{\mathcal{V}}
\newcommand{\defas}{\overset{\mathrm{def}}{=}}
\newcommand{\clgap}{\texttt{clgap}}
\newcommand{\gap}{\texttt{gap}}
\newcommand{\nLAP}{\texttt{nLAP}}
\newcommand{\UB}{\texttt{UB}}
\newcommand{\LB}{\texttt{LB}}
\newcommand{\dccuttypeI}{\texttt{dccut-type-I}}
\newcommand{\dccuttypeII}{\texttt{dccut-type-II}}
\newcommand{\sampleone}{\texttt{sample\_30\_0\_10}}
\newcommand{\sampletwo}{\texttt{sample\_10\_0\_10}}
\DeclareMathOperator{\argmin}{argmin}
\DeclareMathOperator{\argmax}{argmax}
\DeclareMathOperator{\dom}{dom}
\DeclareMathOperator{\epi}{epi}
\DeclareMathOperator{\co}{co}
\tikzstyle{arrow} = [->,>=stealth]
\newcommand{\orcid}[1]{\href{https://orcid.org/#1}{\includegraphics[scale=1]{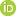}}}
\journalname{}
\begin{document}
\title{A Difference-of-Convex Cutting Plane Algorithm for Mixed-Binary Linear Program
	\thanks{The authors are supported by the National Natural Science Foundation of China (Grant 11601327) and by the Key Construction National ``$985$" Program of China (Grant WF220426001).}
}

%\subtitle{Do you have a subtitle?\\ If so, write it here}

\titlerunning{DCCUT Algorithm for MBLP}        % if too long for running head

\author{Yi-Shuai Niu \orcid{0000-0002-9993-3681} \and Yu You}

\authorrunning{Yi-Shuai Niu \and Yu You} % if too long for running head

\institute{Yi-Shuai Niu \at
              School of Mathematical Sciences \& SJTU-Paristech, Shanghai Jiao Tong University, China \\
              \email{niuyishuai@sjtu.edu.cn}
           \and
           Yu You \at
           School of Mathematical Sciences, Shanghai Jiao Tong University, China \\
           \email{youyu0828@sjtu.edu.cn}
}

\date{Received: date / Accepted: date}
% The correct dates will be entered by the editor

\maketitle

\begin{abstract}
In this paper, we propose a cutting plane algorithm based on DC (Difference-of-Convex) programming and DC cut for globally solving Mixed-Binary Linear Program (MBLP). We first use a classical DC programming formulation via the exact penalization to formulate MBLP as a DC program, which can be solved by DCA algorithm. Then, we focus on the construction of DC cuts, which serves either as a local cut (namely type-I DC cut) at feasible local minimizer of MBLP, or as a global cut (namely type-II DC cut) at infeasible local minimizer of MBLP if some particular assumptions are verified. Otherwise, the constructibility of DC cut is still unclear, and we propose to use classical global cuts (such as the Lift-and-Project cut) instead. Combining DC cut and classical global cuts, a cutting plane algorithm, namely DCCUT, is established for globally solving MBLP. The convergence theorem of DCCUT is proved. Restarting DCA in DCCUT helps to quickly update the upper bound solution and to introduce more DC cuts for lower bound improvement. A variant of DCCUT by introducing more classical global cuts in each iteration is proposed, and parallel versions of DCCUT and its variant are also designed which use the power of multiple processors for better performance. Numerical simulations of DCCUT type algorithms comparing with the classical cutting plane algorithm using Lift-and-Project cuts are reported. Tests on some specific samples and the MIPLIB 2017 benchmark dataset demonstrate the benefits of DC cut and good performance of DCCUT algorithms. 
\keywords{Mixed-binary linear program \and DCA algorithm \and DC cut \and Lift-and-Project cut \and DCCUT algorithm}
\subclass{90C11 \and 90C09 \and 90C10 \and 90C26 \and 90C30}
\end{abstract}

\section{Introduction}\label{sec:intro}
\label{intro}
Mixed-Binary Linear Program, namely MBLP, is a well-known NP-hard problem raised as one of the Karp's $21$ NP-complete problems in \cite{Karp1972}. The general MBLP is given by:
\begin{equation*}
\begin{split}
\min \quad &f(x,y) \defas  c^{\top}x + d^{\top}y\\
\text{s.t.}\quad  &(x,y) \in \mathcal{K}\\
\quad & x \in \{0,1\}^n,
\end{split}
\label{MBLP}
\tag{MBLP}
\end{equation*}
where $n\in \N^*$ is the number of binary variables, $q\in \N$ is the number of continuous variables (i.e., the problem has at least one binary variable, and could have no continuous variable), the coefficients $c\in \R^{n}$ and $d\in \R^{q}$, the set $\K$ is supposed to be a nonempty compact polyhedral convex set of $\R^{n}\times \R^{q}$ in form of   
\begin{equation}
\label{eq:K}
\K\defas\{(x,y)\in [0,1]^n\times [0,\bar{y}] ~|~Ax+By \leq b\},
\end{equation} where $m\in \N$ is the number of linear constraints, the coefficients $A\in \R^{m\times n}$, $B\in \R^{m\times q}$, $b\in \R^m$, and $\bar{y} \in \R_{+}^q$. Note that any \eqref{MBLP} with negative variables in $y$ can be converted in form of positive variables only by classical standardization of linear program. The set of feasible solutions of \eqref{MBLP}, denoted by $\S$, is a subset of $\K$ and assumed to be nonempty.

The researches on MBLP can go back to the early 1950s with the birth of combinatorial integer programming in investigating the well-known traveling salesman problem (TSP) initialed by several pioneers: Hassler Whitney, George Dantzig, Julia Robinson, Ray Fulkerson, and Selmer Johnson \cite{Robinson1949,Dantzig1954,Dantzig1959}. There are very rich literatures on MBLP over the past seven decades, focusing either on theory and algorithms, or on real-world applications. Two classes of approaches exist for general MBLP: Exact methods and Heuristic methods. The exact methods consist of two typical approaches: continuous approaches and combinatorial approaches. The continuous approaches first reformulate the MBLP as nonconvex continuous optimization problems by replacing binary variables as continuous ones, then a classical nonlinear optimization algorithm such as Newton-type method \cite{Bertsekas1997}, gradient-type method \cite{Bertsekas1997}, difference-of-convex approache \cite{Le2001,Niu2008,Niu2010,Pham2016,Niu2018}, and dynamical system approach \cite{Niu2019DiscreteDS} is used for numerical solutions. The advantage of this type of methods is inexpensive local optimization procedure which helps to find quickly a computed solution with guaranteed local optimality at best. The combinatorial approaches exploit techniques such as cutting-plane \cite{Gomory1958,Balas1971,Balas1993,Cornuejols2008}, branch-and-bound/branch-and-cut/branch-and-price \cite{Dantzig1959,Efroymson1966,Kolesar1967}, and column generation \cite{Benders1962} etc., which aim at dividing big problems into smaller problems that are easier to solve with reduced problem size and shrinked search region. These methods can find exact global optimal solution for problems with moderate size. But due to the NP-hardness, they are often very computationally expensive for large-scale cases. To quickly find feasible solutions, heuristic methods are often preferred, such as genetic algorithms, simulated annealing, ant colony, tabu search, and neural networks etc. These approaches mimic activities in nature for searching potentially good candidates without guaranteeing to be optimal, which aim at providing a satisfactory computed solution (may not even be a local optimum) within a reasonable time, but the optimality of the computed solution is not qualified. These methods are often used to provide good feasible initial candidates, or for hard problems in which no exact method works to find any solution. All of these techniques have been widely used in many modern integer/mixed-integer optimization solvers such as GUROBI \cite{Gurobi}, CPLEX \cite{Cplex}, BARON \cite{Sahinidis1996}, MOSEK \cite{Mosek}, MATLAB intlinprog \cite{MATLAB}, and some open-source solvers as SCIP \cite{Achterberg2009}, BONMIN \cite{Bonmin} and COUENNE \cite{Belotti2009}. 

In our paper, we are interested in developing a hybrid approach combining techniques in continuous approach (DCA) and combinatorial approach (cutting planes) for globally solving MBLP. \textit{Concerning continuous approach}, we are interested in DCA which is a promising algorithm to provide good local optimal solution with many successful practical applications (see e.g., \cite{LeThi2005,LeThi2018,Pham1997,Pham1998}). Applying DCA to binary linear program is first introduced in \cite{LeThi2001} by H.A. Le Thi and D.T. Pham, then the general cases with mixed-integer linear/nonlinear optimization are studied by Y.S. Niu, D.T. Pham et al. (see e.g., \cite{Niu2008,Niu2010,Niu2011}) where the integer variables are not supposed to be binary. These methods are based on continuous representation techniques for integer set, exact penalty theorem, DCA and branch-and-bound. There are various applications of this kind of approaches including sentence compression \cite{niu2021sentence}, scheduling \cite{paper_lethi_2009a}, network optimization \cite{Schleich2012}, cryptography \cite{paper_lethi_2009c} and finance \cite{paper_lethi_2009b,Pham2016} etc. Recently, Y.S. Niu (the first author of the paper) released an open-source optimization solver, combining DCA with parallel branch-and-bound framework, namely PDCABB \cite{Niu2018} (code available on Github \cite{PDCABB}), for general mixed-integer nonlinear optimization. \textit{Concerning combinatorial approach}, we focus on DC cutting plane technique constructed at local minimizers provided by DCA. DC cut is proposed in \cite{Nguyen2006} with a DCA-CUT algorithm, and applied in several real-world problems such as the bin-parking problem \cite{Babacar2012} and the scheduling problem \cite{Quang2010}. However, DCA-CUT algorithm is not well-constructed in the sense that there exist some cases, often encountered in practice, where DC cut is unconstructible. Due to this drawback, there are several works (e.g., \cite{Niu2012,Quang2010}) combining constructible DC cuts, DCA and branch-and-bound for global optimal solutions of MBLP.

Our \textit{contributions} are: (1) Revisit DC cutting plane (cf. DC cut) techniques and provide more theoretical results on constructible and unconstructible cases for DC cuts. As a result, two types of DC cuts: type-I DC cut (local DC cut) and type-II DC cut (global DC cut) are established at local minimizers of DC programs; (2) Propose using classical global cuts, such as the Lift-and-Project (L\&P) cut, Gomory's mixed-integer (GMI) cut and mixed integer rounding (MIR) cut, when DC cut is unconstructible, to cut off infeasible point; (3) Establish a cutting plane algorithm, namely DCCUT algorithm, combining DC cut and classical global cuts for globally solving MBLP. The convergence theorem of DCCUT is also proved. (4) Implement DCCUT algorithm, its variant DCCUT-V1 with more cutting planes in each iteration, and parallel algorithms (P-DCCUT and P-DCCUT-V1) in a MATLAB toolbox and shared on Github \url{https://github.com/niuyishuai/DCCUT}. 

The rest of the paper is organized as follows: In Section \ref{sec:DCform}, we presents general results on DC programming formulation and DCA for MBLP. In Section \ref{sec:DCcut}, we investigate in detail the DC cut techniques, and briefly introduce the classical Lift-and-Project cut. The proposed DCCUT algorithm and its variant (with and without parallelism) are established in Section \ref{sec:DCCUTAlgo}. Numerical results are reported in Section \ref{sec:Experiments}. Concluding remarks and related questions are discussed in the last section.

\section{DC programming approach for MBLP}\label{sec:DCform}

In this section, we first introduce some backgrounds on Difference-of-Convex (DC) programming and DCA algorithm for readers to better understand the fundamental tool used in this paper, then we present the DC formulation for problem \eqref{MBLP} based on exact penalty theorem, and apply DCA to solve it.

\subsection{Notations}
Let $\R^l$ denote the $l$-dimensional Euclidean space equipped with the standard inner product $\langle \cdot, \cdot \rangle$ and the induced norm $\Vert \cdot \Vert$. 
For any extended real-valued function $\phi:\R^l\rightarrow(-\infty,+\infty]$, the effective domain is given by $\dom \phi \defas \{x\in\R^{l}~|~\phi(x) < +\infty \}$. If $\dom \phi \neq \emptyset$, then $\phi$ is called a proper function. The directional derivative of $\phi$ at $x\in \dom\phi$ along a direction $d\in \R^l$ is defined by
\begin{equation*}
\phi'(x;d) = \lim\limits_{\tau \downarrow 0} \frac{\phi(x+\tau d)-\phi(x)}{\tau}.
\end{equation*}
The epigraph of $\phi$ is defined by
$\epi \phi\defas\{(x,t)\in \R^l \times \R~|~\phi(x)\leq t\}$, and $\phi$ is called a closed (resp. convex) function if $\epi \phi$ is closed (resp. convex). In particular, $\phi$ is called a polyhedral function if $\epi \phi$ is a polyhedral convex set. Given a nonempty polyhedral convex set $\K \subseteq \R^l$, the vertex set of $\K$ is denoted by $V(\mathcal{K})$ whose cardinality $|V(\mathcal{K})|$ is clearly finite. The set of all proper, closed and convex functions on $\R^l$ is denoted by $\Gamma_0(\R^l)$, which is a convex cone; and $\text{DC}(\R^l)\defas\Gamma_0(\R^l) - \Gamma_0(\R^l)$ is the set of DC (Difference-of-Convex) functions (under the convention that $(+\infty) - (+\infty) = +\infty$), which is in fact a vector space spanned by $\Gamma_0(\R^l)$. Let $\phi$ be a proper convex function, the subdifferential of $\phi$ at $x$ is defined by:
$$\partial \phi(x) \defas \{u\in \R^l~|~\phi(y) \geq \phi(x)+\langle u,y-x \rangle \;\text{for all}\;y\in \R^l\},$$ where any point in $\partial \phi(x)$ is called a subgradient of $\phi$ at $x$.  Clearly, $\partial \phi(x) \neq \emptyset$ implies that
$x \in \dom \phi$; particularly, if $\phi$ is differentiable at $x\in \dom \phi$, then $\partial \phi(x)$ is reduced to a singleton, i.e., $\partial \phi (x) = \{\triangledown \phi(x)\}$. 

Let $\C \subseteq \R^l$ be a nonempty convex set, the indicator function of $\C$, denoted by $\chi_{\mathcal{C}}:\R^l\to \R$, equals to $0$ on $\C$ and $+\infty$ otherwise. For any $x\in \C$, the set $\NC(x)\defas\{d\in \R^l~|~\langle d,y-x\rangle \leq 0 \text{ for all } y \in \C\}$ denotes the normal cone of $\C$ at $x$, and $\NC(z)=\emptyset$ if $z\notin \C$ by convention.

\subsection{DC program and DCA}
The standard DC program is defined by  
\begin{equation}
\label{primal1}
\alpha = \text{min}\; \{\phi(x) \defas \phi_1(x) - \phi_2(x):x\in\R^l\}, \tag{$P_{dc}$}
\end{equation}
where $\phi_1$ and $\phi_2$ belong to $\Gamma_0(\R^l)$ and $\phi\in \text{DC}(\R^l)$. The objective value $\alpha$ is assumed to be finite, so that $\emptyset \neq \dom \phi_1 \subset \dom \phi_2$. A point $x^*$  is called a \textit{critical point} of problem (\ref{primal1}) if $\partial \phi_1(x^*)  \cap \partial \phi_2(x^*) \neq \emptyset$, and a \emph{strongly critical point} of problem (\ref{primal1}) if $\emptyset \neq \partial \phi_2(x^*) \subset \partial \phi_1(x^*)$, i.e., $\phi'(x^*;x-x^*) \geq 0$ for all $x\in \dom\phi_1$.  Clearly, a strongly critical point must be a critical point, but the converse is not true in general. Note that in nonconvex optimization, a critical point or a strongly critical point of problem \eqref{primal1} may not be a local minimizer. We have some particular cases in which a stationary point is guaranteed to be a local minimizer of $\phi$, e.g., if $\phi$ is locally convex at a critical point $x^*$, then $x^*$ must be a local minimizer of $\phi$. The readers can refer to \cite{Pang2016} for more details about the stationarity. 

A well-known DC algorithm, namely DCA, for \eqref{primal1} was first introduced by D.T. Pham in 1985 as an extension of the subgradient method, and extensively developed by H.A. Le Thi and D.T. Pham since $1994$ (see e.g., \cite{Pham1997,Pham1998,LeThi2005,LeThi2018}). Broadly speaking, DCA is not one single algorithm, it is a philosophy to solve DC programs by a sequence of convex ones. A classical DCA gives:
$$x^{k+1}\in \argmin \{ \phi_1(x) - \langle x, y^k \rangle ~|~x\in \R^l \}$$
with $y^k\in \partial \phi_2(x^k)$. This convex program is derived from minimizing a convex overestimation of $\phi$ at iterate $x^k$, denoted by $\phi^k$, constructed by linearizing $\phi_2$ at $x^k$ as 
$$
\forall x\in \R^l,~ \phi(x) = \phi_1(x)-\phi_2(x)\leq \phi_1(x) - (\phi_2(x^k)+\langle x-x^k, y^k \rangle)\overset{\mathrm{def}}{=} \phi^k(x),
$$ 
where $y^k\in \partial \phi_2(x^k)$. The principal of DCA can be viewed geometrically in \cite{Niu2010,Niu2014}. See below the classical DCA for problem \eqref{primal1}:
\renewcommand{\algorithmcfname}{DCA for problem \eqref{primal1} }
\renewcommand{\thealgocf}{}
\begin{algorithm}[ht]
    \label{algo:DCA0}
	\caption{}
	
	\textbf{Initialization:} Choose $x^0 \in \dom(\partial \phi_2)$.
	
	\textbf{Iterations:} For $k = 0,1,2,\dots$
	
	Compute $y^k \in \partial \phi_2(x^k)$;
	
	Find $x^{k+1} \in \argmin\{ \phi_1(x)-\langle x,y^k \rangle, x\in\R^l\}.$	
\end{algorithm}

\begin{remark}
	To make sure that DCA can be implemented, it is required that at iteration $k$ both $\partial \phi_2(x^k)$ and $\argmin\{ \phi_1(x)-\langle x,y^k \rangle, x\in\R^l\}$ are nonempty. 
\end{remark}
The next theorem summarizes some important convergence results of DCA.
\begin{theorem}[Convergence of DCA \cite{Pham1997,LeThi2005}]
	\label{thm:1}
	Let $\{x^k\}$ and $\{y^k\}$ be bounded sequences generated by DCA Algorithm \ref{algo:DCA0} for problem \eqref{primal1}, and suppose that $\alpha$ is finite, then
	\begin{enumerate}
		\item The sequence $\{\phi(x^k)\}$ is decreasing and bounded below.
		\item Any limit point of $\{x^k\}$ is a critical point of problem \eqref{primal1}.
		\item If either $\phi_1$ or $\phi_2$ is polyhedral, then problem $\eqref{primal1}$ is called polyhedral DC program, and DCA is terminated in finitely many iterations.
		\item If $x^*$ is a critical point generated by DCA, and if $\phi$ is locally convex at $x^*$, then $x^*$ is a local minimizer of $\phi$.
	\end{enumerate}
\end{theorem}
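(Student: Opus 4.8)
The plan is to exploit the majorization--minimization structure underlying DCA. At each step the convex surrogate $\phi^k(x)=\phi_1(x)-\phi_2(x^k)-\langle x-x^k,y^k\rangle$ with $y^k\in\partial\phi_2(x^k)$ satisfies $\phi\le\phi^k$ on $\R^l$ (by convexity of $\phi_2$) with equality at $x^k$, and the minimization step $x^{k+1}\in\argmin\phi^k$ yields the optimality inclusion $y^k\in\partial\phi_1(x^{k+1})$. The two inclusions $y^k\in\partial\phi_2(x^k)$ and $y^k\in\partial\phi_1(x^{k+1})$, together with the majorant inequality, are the only ingredients I would use throughout.

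For part (1), I would chain $\phi(x^{k+1})\le\phi^k(x^{k+1})\le\phi^k(x^k)=\phi(x^k)$, the middle inequality being the definition of $x^{k+1}$ as a minimizer of $\phi^k$; monotone decrease follows, and boundedness below is immediate since $\phi(x^k)\ge\alpha>-\infty$. For part (2), I would take a subsequence $x^{k_j}\to x^*$, and, using boundedness of $\{y^k\}$, assume also $y^{k_j}\to y^*$. Passing to the limit in the subgradient inequality defining $y^{k_j}\in\partial\phi_2(x^{k_j})$ and invoking lower semicontinuity of $\phi_2$ gives $y^*\in\partial\phi_2(x^*)$ (closedness of the subdifferential graph). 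To obtain $y^*\in\partial\phi_1(x^*)$ from $y^{k_j}\in\partial\phi_1(x^{k_j+1})$ I must align the two limits: since $\{\phi(x^k)\}$ converges, the gap $\phi^k(x^k)-\phi^k(x^{k+1})=\phi_1(x^k)-\phi_1(x^{k+1})+\langle x^{k+1}-x^k,y^k\rangle\to0$, which (with a strong-convexity or regularization modulus, or by a further compactness extraction on $\{x^{k_j+1}\}$) forces the limit of $x^{k_j+1}$ to coincide with $x^*$, whence closedness of $\partial\phi_1$ gives $y^*\in\partial\phi_1(x^*)$ and $x^*$ is critical. I expect this alignment of the $x^{k+1}$ and $x^k$ limits to be the main technical obstacle.

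For part (3), polyhedrality of $\phi_1$ or $\phi_2$ makes the corresponding subdifferential a set-valued map that is constant on each cell of a finite polyhedral subdivision and takes only finitely many distinct values. Hence $\{y^k\}$ ranges over a finite set, and the convex subproblems have objectives drawn from a finite family; combined with the strict decrease of part (1) away from criticality, no configuration can recur without the stopping (critical) condition having already been met, so DCA halts after finitely many iterations.

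For part (4), working on a neighborhood $U$ on which $\phi$ is convex, I would reduce local minimality to the convex stationarity condition $0\in\partial(\phi|_U)(x^*)$ and try to derive it from criticality via the directional-derivative identity $\phi'(x^*;d)=\phi_1'(x^*;d)-\phi_2'(x^*;d)$, with each $\phi_i'(x^*;d)=\max_{u\in\partial\phi_i(x^*)}\langle u,d\rangle$. Taking $y^*\in\partial\phi_1(x^*)\cap\partial\phi_2(x^*)$ gives both $\phi_1'(x^*;d)\ge\langle y^*,d\rangle$ and $\phi_2'(x^*;d)\ge\langle y^*,d\rangle$; the clean conclusion $\phi'(x^*;d)\ge0$ emerges cleanly when the criticality is strong ($\partial\phi_2(x^*)\subseteq\partial\phi_1(x^*)$) or $\phi_2$ is differentiable at $x^*$, so that $\phi_2'(x^*;d)=\langle y^*,d\rangle\le\phi_1'(x^*;d)$. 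Closing this gap for a merely critical point is the conceptual heart of the statement, and is where I would concentrate the argument, transferring criticality into the required stationarity through the convex sum rule $\partial\phi_1(x^*)=\partial(\phi|_U)(x^*)+\partial\phi_2(x^*)$ valid on $U$.
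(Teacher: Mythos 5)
The paper never proves Theorem \ref{thm:1}; it is quoted from \cite{Pham1997,LeThi2005}, so your attempt has to be judged against the standard arguments in those references and against Proposition \ref{prop:1}, the only place the paper actually uses part (4). Your part (1) is exactly the standard majorization chain and is correct. Part (3) is essentially right, provided you make two things explicit: the selections must have finite range (extreme-point subgradients of the polyhedral component, vertex solutions of the subproblems), and you need the lemma that $\phi(x^{k+1})=\phi(x^k)$ forces $x^k$ to be critical (equality collapses your chain in (1), so $x^k$ itself minimizes $\phi^k$, giving $y^k\in\partial\phi_1(x^k)\cap\partial\phi_2(x^k)$); your ``strict decrease away from criticality'' is the contrapositive of this lemma, not a consequence of part (1). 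The real problem is part (2): the alignment obstacle you flag is a genuine gap and neither of your repairs works. Strong convexity is not a hypothesis of the theorem, and a further extraction from $\{x^{k_j+1}\}$ only yields some limit $\tilde x$ with $y^*\in\partial\phi_1(\tilde x)$; nothing forces $\tilde x=x^*$. The standard proof never aligns consecutive iterates: it introduces the dual objective $\phi_D(y)=\phi_2^*(y)-\phi_1^*(y)$ and the Fenchel equalities $\phi_2(x^k)+\phi_2^*(y^k)=\langle x^k,y^k\rangle$ and $\phi_1(x^{k+1})+\phi_1^*(y^k)=\langle x^{k+1},y^k\rangle$, which give the interlacing $\phi(x^k)\ge\phi_D(y^k)\ge\phi(x^{k+1})$; hence primal and dual values converge to a common limit $\beta$, so along your subsequence $\phi_1(x^{k_j})+\phi_1^*(y^{k_j})=\phi(x^{k_j})-\phi_D(y^{k_j})+\langle x^{k_j},y^{k_j}\rangle\to\langle x^*,y^*\rangle$; lower semicontinuity of $\phi_1$, $\phi_1^*$ and the Fenchel--Young inequality then force $\phi_1(x^*)+\phi_1^*(y^*)=\langle x^*,y^*\rangle$, i.e.\ $y^*\in\partial\phi_1(x^*)$, while $y^*\in\partial\phi_2(x^*)$ is the graph-closedness argument you already have.

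In part (4) you have correctly located the crux, but the sum-rule repair cannot close it, because the statement is false for a merely critical point. Take $l=1$, $\phi_1(x)=\max\{x/2,\,-3x/2\}+\chi_{[-1,1]}(x)$ and $\phi_2(x)=|x|$; then $\phi(x)=-x/2+\chi_{[-1,1]}(x)$, so $\alpha=-1/2$ is finite and $\phi$ is convex (indeed affine) near $0$. We have $\partial\phi_1(0)=[-3/2,1/2]$ and $\partial\phi_2(0)=[-1,1]$, which intersect, and DCA started at $x^0=0$ with the admissible selection $y^0=0\in\partial\phi_2(0)$ returns $x^1=0$, so $0$ is a critical point generated by DCA at which $\phi$ is locally convex; yet $0$ is not a local minimizer, since $\phi(x)<\phi(0)$ for all $x\in(0,1]$. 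Your own sum rule shows exactly why the transfer fails: $\partial(\phi|_U)(0)+\partial\phi_2(0)=\{-1/2\}+[-1,1]=\partial\phi_1(0)$, and criticality only provides $y^*=u+v$ with $u\in\partial(\phi|_U)(0)$, $v\in\partial\phi_2(0)$, $y^*\in\partial\phi_2(0)$, which does not put $0$ into $\partial(\phi|_U)(0)$. The provable statement---and the only form the paper uses, in Proposition \ref{prop:1}---adds strong criticality, e.g.\ via differentiability of $\phi_2$ at $x^*$, so that $\partial\phi_2(x^*)=\{\nabla\phi_2(x^*)\}\subseteq\partial\phi_1(x^*)$; then your directional-derivative computation (or the two-inequality argument in the proof of Proposition \ref{prop:1}) finishes immediately. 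In summary: (1) is correct, (3) is correct modulo the two explicit points above, (2) needs the duality argument rather than iterate alignment, and (4) needs a strengthened hypothesis, not a cleverer proof.
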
 

More theoretical results of DC program and DCA can be found in \cite{Pham1997,Pham1998,LeThi2005,LeThi2018} and the references therein.

\subsection{DC formulation for \eqref{MBLP}}
We will show that problem \eqref{MBLP} can be equivalently represented as a standard DC program in form of \eqref{primal1}. 

Firstly, we will use continuous representation technique to reformulate the binary set $\{0,1\}^n$ as a set involving continuous variables and continuous functions only. A classical way is using a nonnegative concave function $p$ over $[0,1]^n$ to rewrite the binary set as:
$$\{0,1\}^n = \{x\in [0,1]^n~|~p(x)\leq 0\}.$$
Then $$\S = \{(x,y)\in \K~|~ p(x)\leq 0\},$$
where the polyhedral convex set $\K$ is defined in \eqref{eq:K}. 
There are many alternative functions for $p$, such as the quadratic function $p:x\mapsto \sum_{i=1}^{n}x_i(1-x_i)$ and the piecewise linear function $p:x\mapsto \sum_{i=1}^{n} \min\{x_i,1-x_i\}$. The main differences between these two functions are: the quadratic function is differentiable and concave over $[0,1]^n$; while the piecewise linear function is not differentiable at some points (e.g., the points with some coordinates as $0.5$) but it is \emph{locally convex} (specifically, locally affine) at any differentiable point over $[0,1]^n$. In this paper, we will use the latter function since the local convexity is crucial to identify a local minimizer for a critical point returned by DCA.

Next, we will use the well known exact penalty theorem \cite{penalty1999,LeThi2012} to reformulate problem \eqref{MBLP} as a standard DC program. The exact penalty theorem is stated as follows:
\begin{theorem}[See e.g., \cite{penalty1999,LeThi2012}]
	\label{thm:exactpenalty}
	Let $p(x)= \sum_{i=1}^{n} \min\{x_i,1-x_i\}$, then there exists a finite number $t_0 \geq 0$ such that for all $t > t_0$, problem (\ref{MBLP}) is equivalent to
	\begin{equation}
	\label{Pt}
	\alpha_{t} = \min \{f(x,y) + t p(x) ~|~ (x,y) \in \mathcal{K}\}.
	\tag{$P_t$}
	\end{equation}
\end{theorem}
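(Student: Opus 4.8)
The plan is to exploit the concavity of the penalty and the polyhedral structure of $\K$, reducing the equivalence to a finite comparison over the vertices $V(\K)$. First I would record two structural facts that drive everything. Since each coordinate map $x_i\mapsto\min\{x_i,1-x_i\}$ is a minimum of two affine functions, $p$ is \emph{concave} on $\R^n$; moreover $p\geq 0$ on $[0,1]^n$ and $p(x)=0$ if and only if $x\in\{0,1\}^n$, so that $\S=\{(x,y)\in\K~|~p(x)=0\}$. Consequently, for any $t\geq 0$ the penalized objective $(x,y)\mapsto f(x,y)+tp(x)$ is concave, being the sum of the linear function $f$ and the concave function $tp$. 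Writing $\alpha^\ast$ for the optimal value of \eqref{MBLP} (finite, since $\K$ is compact and $\S\neq\emptyset$), the easy inequality $\alpha_t\leq\alpha^\ast$ holds for every $t\geq 0$: an optimal $(x^\ast,y^\ast)$ of \eqref{MBLP} lies in $\K$ with $p(x^\ast)=0$, hence is feasible for \eqref{Pt} with objective value $\alpha^\ast$. The content of the theorem is the reverse inequality, uniformly for all large $t$.

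To build the threshold, I would partition the finite vertex set into $V_0=\{v\in V(\K)~|~p(x_v)=0\}$, whose elements are feasible for \eqref{MBLP}, and $V_+=\{v\in V(\K)~|~p(x_v)>0\}$. Over the finite set $V_+$ the values $p(x_v)$ are bounded below by a strictly positive number and $f$ is bounded, so
\begin{equation*}
t_0\defas\max\Bigl\{0,\;\max_{v\in V_+}\frac{\alpha^\ast-f(v)}{p(x_v)}\Bigr\}
\end{equation*}
is finite (with $t_0=0$ when $V_+=\emptyset$). A direct check then gives, for every $t>t_0$, the per-vertex bound $f(v)+tp(x_v)\geq\alpha^\ast$, strict whenever $v\in V_+$ (for $v\in V_0$ it reads $f(v)\geq\alpha^\ast$, true since such $v$ is feasible).

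Next I would propagate this from the vertices to all of $\K$. Writing an arbitrary $(x,y)\in\K$ as a convex combination $\sum_j\lambda_j v_j$ of vertices and combining the \emph{linearity} of $f$ with the \emph{concavity} of $p$ (here $t>0$ is used) yields $f(x,y)+tp(x)\geq\sum_j\lambda_j\bigl(f(v_j)+tp(x_{v_j})\bigr)\geq\alpha^\ast$. Taking the infimum over $\K$ gives $\alpha_t\geq\alpha^\ast$, whence $\alpha_t=\alpha^\ast$. For the correspondence of minimizers, I would track when equality is attained: at any global solution $(\hat x,\hat y)$ of \eqref{Pt} both inequalities above must be tight, which forces every active vertex ($\lambda_j>0$) to lie in $V_0$ — the vertices of $V_+$ being strictly suboptimal for $t>t_0$ — and then the tight concavity estimate forces $p(\hat x)=\sum_j\lambda_j p(x_{v_j})=0$. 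Hence $(\hat x,\hat y)\in\S$ with value $\alpha^\ast$, i.e. it solves \eqref{MBLP}; the converse inclusion is immediate, giving the full equivalence.

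I expect the main obstacle to be this last step — upgrading equality of optimal values to equivalence of the solution sets — rather than the value estimate. It is exactly here that the piecewise-linear (hence polyhedral and concave) choice of $p$ is decisive: concavity makes the convex-combination inequality point the right way, while linearity of $f$ keeps that combination exact, so that tightness can be traced back to $p(\hat x)=0$. A minor point to dispatch is that $V_0\neq\emptyset$ (equivalently, that $\alpha^\ast$ is attained), which follows because each binary assignment cuts out a face of $\K$ through the box constraints and a nonempty face carries a vertex of $\K$; this is automatically consistent with the above, where the minimizing data necessarily sit in $V_0$.
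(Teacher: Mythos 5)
Your proof is correct, but there is nothing in the paper to compare it against: the paper states this theorem as a known result, citing \cite{penalty1999,LeThi2012}, and gives no proof of its own --- it only records, without derivation, the threshold formula \eqref{eq:t0}, namely $t_0=\bigl(\min\{f(x,y)~|~(x,y)\in\S\}-\alpha_0\bigr)/m$ with $m=\min\{p(x)~|~(x,y)\in V(\K),\,p(x)>0\}$. Your argument is, in substance, the classical exact-penalty proof for a concave penalty over a polytope that underlies those references: concavity of $p$, Minkowski decomposition of an arbitrary point of $\K$ into vertices, a per-vertex bound that makes every vertex with $p>0$ strictly suboptimal once $t$ exceeds the threshold, and a tightness argument pinning minimizers of \eqref{Pt} to the set $\{p=0\}$. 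Two remarks. First, your threshold $\max\{0,\max_{v\in V_+}(\alpha^*-f(v))/p(x_v)\}$ is at least as sharp as the paper's: since $f(v)\ge\alpha_0$ and $p(x_v)\ge m$ for every $v\in V_+$, each ratio with a positive numerator is bounded above by $(\alpha^*-\alpha_0)/m$, which is exactly \eqref{eq:t0}. Second, you were right to push beyond equality of optimal values to equality of the \emph{solution sets}, since that is precisely what ``equivalent'' means here (the paper says so immediately after the theorem); your tightness argument (every active vertex must lie in $V_0$, hence $p(\hat x)=0$, hence $(\hat x,\hat y)\in\S$ with value $\alpha^*$) handles this cleanly, and it is exactly the step where concavity of the piecewise-linear penalty, linearity of $f$, and strict positivity of $t$ are genuinely used.
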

 
The equivalence means that problems (\ref{MBLP})
and (\ref{Pt}) have the same set of global optimal solutions. The penalty parameter $t_0$ can be computed by: 
\begin{equation}
    \label{eq:t0}
t_0 = \frac{\min\{f(x,y)~|~(x,y)\in \S\} -\alpha_0}{m},
\end{equation}
where $m = \min \{p(x)~|~ (x,y)\in V(\mathcal{K}), p(x) > 0\}$ under the convention that $m=+\infty$ if $\{(x,y)\in V(\mathcal{K})~|~p(x) > 0\} = \emptyset$ and $\frac{1}{+\infty} = 0$. In practice, computing $t_0$ is difficult since both $\min\{f(x,y)~|~(x,y)\in \S\}$ and $\min \{p(x)~|~ (x,y)\in V(\mathcal{K}), p(x) > 0\}$ are nonconvex optimization problems which are difficult to be solved, while the computation of $\alpha_0$ is easy which just need to solve a linear program. Note that if the set of vertices $V(\K)$ is known, then it will be much easier to find an upper bound for $t_0$, since $m$ can be easily solved by checking all points in $V(\K)$, and we just need to find a feasible solution of $\min\{f(x,y)~|~(x,y)\in \S\}$ to get an upper bound for $t_0$. In numerical simulations, the parameter $t$ is often fixed arbitrarily to be a large positive number. 

Supposing that $t$ is large enough, then problem \eqref{Pt} is in fact a concave minimization problem over a polyhedral convex set which is equivalent to a standard DC program as:
$$\min\{ \tau_t(x,y) ~|~ (x,y)\in \R^n\times \R^q\},$$
where $$\tau_t(x,y) = \underbrace{\chi_{\mathcal{K}}(x,y)}_{g(x,y)} - [\underbrace{-f(x,y)-tp(x)}_{h(x,y)}]$$
is a DC function in $\text{DC}(\R^n\times \R^q)$, and $g$ and $h$ are $\Gamma_0(\R^n\times \R^q)$ functions.

\subsection{DCA for Problem \eqref{Pt}}
To apply DCA for problem \eqref{Pt}, we first compute $\partial h$ by

$$\partial h(x,y) = (-c + tz, -d), \text{ where } z_i = \begin{cases}
1 & \text{, if } x_i >\frac{1}{2};\\
s \in [-1,1] & \text{, if } x_i = \frac{1}{2};\\
-1 & \text{, if } x_i < \frac{1}{2};
\end{cases}$$
for all $i=1,\ldots,n$.
Then, we can compute $(x^{k+1},y^{k+1})$ from $(x^k,y^k)$ by solving the linear program via simplex method:
$$(x^{k+1},y^{k+1}) \in \argmin \{ -\langle(v^k,w^k), (x,y) \rangle:(x,y)\in \mathcal{K} \},$$
where $(v^k,w^k)\in \partial h(x^k,y^k)$. Note that only $v^k$ is updated in each iteration and $w_k$ is fixed to $-d$. 

DCA could be terminated if $|\tau_t(x^{k+1},y^{k+1}) - \tau_t(x^k,y^k)|/(|\tau_t(x^{k+1},y^{k+1})|+1)\leq \varepsilon_1$ and/or $\|(x^{k+1},y^{k+1}) - (x^k,y^k)\|/(\|(x^{k+1},y^{k+1})\|+1)\leq \varepsilon_2$ where $\varepsilon_1$ and $\varepsilon_2$ are given tolerances. 

The detailed DCA for problem (\ref{Pt}) is summarized as follows. 
\renewcommand{\algorithmcfname}{DCA for problem (\ref{Pt})}
\renewcommand{\thealgocf}{}
\begin{algorithm}[ht]
	\label{algo:DCA}
	\caption{}
	\KwIn{Initial point $(x^0,y^0)\in \R^n\times \R^q$, small tolerances $\varepsilon_1>0$ and $\varepsilon_2>0$.}
	\KwOut{Computed solution $(x^*,y^*)$ and computed optimal value $\tau^*$.} 
	
	\textbf{Initialization:} Set $k \leftarrow 0$.
	
	\textbf{Step 1:} Compute $(v^k,w^k) \in \partial h(x^k,y^k)$.
	
	\textbf{Step 2:} Solve the linear program via the simplex algorithm: $$(x^{k+1},y^{k+1}) \in \argmin \{ -\langle(v^k,w^k), (x,y) \rangle:(x,y)\in \mathcal{K} \};$$

	$\Delta \tau \leftarrow |\tau_t(x^{k+1},y^{k+1}) - \tau_t(x^k,y^k)|/(|\tau_t(x^{k+1},y^{k+1})|+1);$ 

	$\Delta X \leftarrow \|(x^{k+1},y^{k+1}) - (x^k,y^k)\|/(\|(x^{k+1},y^{k+1})\|+1);$ 
	
	\textbf{Step 3:} Stopping check: 	

	\SetKwIF{If}{ElseIf}{Else}{if}{then}{else if}{else}{end}
	\eIf{$\Delta \tau \leq \varepsilon_1$ or $\Delta X\leq \varepsilon_2$}
	{
		$(x^*,y^*) \leftarrow (x^{k+1},y^{k+1})$; $\tau^* \leftarrow \tau_t(x^{k+1},y^{k+1})$; \Return;
	}
	{
		$k \leftarrow k+1$; \textbf{Goto} Step 1;
	}
\end{algorithm}

\begin{remark}
	\begin{enumerate}
		\item In Step 1, the selection of $(v^k,w^k)$ in $\partial h(x^k,y^k)$ may not be unique. When $x_i^k=1/2$, we can choose $u_i$ randomly in $[-1,1]$. 
		\item We suggest using the well-known simplex algorithm for solving the linear program required in Step 2 to find a vertex solution. It is easy to see that there exists optimal solution of \eqref{MBLP} in $V(\co(\S))$ (set of vertices of convex hull of $\S$), therefore, only vertex solutions are of our interests. 
	\end{enumerate}
\end{remark}

The convergence theorem of DCA for problem \eqref{Pt} is summarized in Theorem \ref{thm:1}. Note that problem \eqref{Pt} is a polyhedral DC program, thus DCA will converge in finitely many iterations. Concerning the local optimality of the computed solution returned by DCA, we have the following Proposition:

\begin{proposition}
	\label{prop:1}
		Let $(x^*,y^*)$ be a computed solution of problem \eqref{Pt} returned by DCA, if $x_i^* \neq 1/2, \forall i =1,\cdots,n$, then $(x^*,y^*)$ is a local minimizer of \eqref{Pt}.
\end{proposition}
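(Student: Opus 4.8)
The plan is to reduce the claim to part~(4) of Theorem~\ref{thm:1}, which asserts that a critical point generated by DCA at which the objective is locally convex is automatically a local minimizer. So two facts must be assembled: first, that the computed solution $(x^*,y^*)$ is a genuine critical point of \eqref{Pt}; and second, that $\tau_t$ is locally convex at $(x^*,y^*)$ precisely because $x_i^*\neq 1/2$ for every $i$.

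For the first fact I would recall that \eqref{Pt} is a polyhedral DC program, since the convex part $g=\chi_{\mathcal{K}}$ is a polyhedral function. Hence, by parts~(2)--(3) of Theorem~\ref{thm:1}, the sequence produced by DCA terminates after finitely many steps at a limit point, and every limit point is a critical point; thus $(x^*,y^*)$ is critical. I would also observe that every DCA iterate solves the linear program over $\mathcal{K}$, so $(x^*,y^*)\in\mathcal{K}$ and $\tau_t(x^*,y^*)$ is finite, which is what makes the criticality meaningful.

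The core of the argument is the local convexity, and this is where the hypothesis $x_i^*\neq 1/2$ enters. Writing $\tau_t = \chi_{\mathcal{K}} + f + t\,p$, I observe that the only possible source of nonconvexity is the concave term $t\,p$, where $p(x)=\sum_{i=1}^n \min\{x_i,1-x_i\}$. Each summand $\min\{x_i,1-x_i\}$ is affine away from its kink at $x_i=1/2$, equalling $x_i$ when $x_i<1/2$ and $1-x_i$ when $x_i>1/2$. Since $x_i^*\neq 1/2$ for all $i$, each $x_i^*$ has a strictly positive distance to $1/2$, so I can pick a radius $\delta>0$ so small that on the ball $B((x^*,y^*),\delta)$ every coordinate $x_i$ stays strictly on the same side of $1/2$ as $x_i^*$. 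On this ball $p$ coincides with a single affine function $\hat p$, and therefore $\tau_t = \chi_{\mathcal{K}} + f + t\,\hat p$ is the sum of a convex indicator function and an affine function, hence convex; this exhibits $\tau_t$ as locally convex at $(x^*,y^*)$.

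Combining the two facts via part~(4) of Theorem~\ref{thm:1} then gives that $(x^*,y^*)$ is a local minimizer of \eqref{Pt}. The only delicate point is the local-convexity step: I must verify that shrinking the neighborhood enough to avoid every coordinate hyperplane $\{x_i=1/2\}$ genuinely renders $p$ affine on the whole ball (not merely on $\mathcal{K}$), and that adjoining $\chi_{\mathcal{K}}$ preserves convexity under the usual $+\infty$ convention. I expect no serious obstruction beyond this bookkeeping, since convexity of a sum of a convex indicator and an affine function is immediate.
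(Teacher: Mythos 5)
Your proposal reaches the correct conclusion and verifies both hypotheses it needs (criticality of $(x^*,y^*)$ via the polyhedral structure and parts (2)--(3) of Theorem~\ref{thm:1}; local convexity of $\tau_t$ via the affine behaviour of $p$ away from the hyperplanes $x_i=\frac{1}{2}$), but it takes a genuinely different route from the paper. The paper never invokes part (4) of Theorem~\ref{thm:1}: it uses the hypothesis $x_i^*\neq \frac{1}{2}$ a \emph{second} time, namely to conclude that $h=-f-tp$ is differentiable at $(x^*,y^*)$, so that criticality $\partial g(x^*,y^*)\cap\partial h(x^*,y^*)\neq\emptyset$ collapses to $\nabla h(x^*,y^*)\in\partial\chi_{\mathcal{K}}(x^*,y^*)=\mathcal{N}_{\mathcal{K}}(x^*,y^*)$, i.e.\ to the first-order condition $\langle\nabla(-h)(x^*,y^*),u-(x^*,y^*)\rangle\geq 0$ for all $u\in\mathcal{K}$; combining this with the local affineness of $-h$ then gives $(-h)(u)\geq(-h)(x^*,y^*)$ on $U\cap\mathcal{K}$ directly. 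What the paper's route buys is self-containedness, and that matters here more than it might appear.

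The caveat is that part (4) of Theorem~\ref{thm:1}, read literally, is not a safe black box: criticality is a weak notion when $\partial\phi_2$ is not a singleton at the point, and one can build polyhedral instances with finite optimal value where DCA stalls at a critical point, the objective is locally affine there, and yet the point is not a local minimizer. For example, take $\phi_1(x)=\max(-x,x-2)+\max(0,-x)$ and $\phi_2(x)=\max(0,-x)$ on $\R$, so $\phi(x)=\max(-x,x-2)$; then $x^*=0$ is critical since $\partial\phi_1(0)\cap\partial\phi_2(0)=[-2,-1]\cap[-1,0]=\{-1\}$, DCA can remain at $0$ forever (choose $y^0=-1$ and note $0\in\argmin\{\phi_1(x)+x\}=[0,1]$), and $\phi$ is affine near $0$, but $0$ is not a local minimizer of $\phi$. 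The reason your reduction is nonetheless sound in the present setting is precisely a fact you established but never used: on your ball, $p$ and hence $h$ are affine, so $\partial h(x^*,y^*)$ is the singleton $\{\nabla h(x^*,y^*)\}$, and criticality really is the stationarity condition that the local-convexity argument requires. Making that observation explicit and running the two-line first-order argument --- which is exactly what the paper does --- removes the dependence on the imprecisely stated part (4) and is the preferable way to finish.
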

\begin{proof}
	The function $h$ is differentiable at $(x^*,y^*)$ since all entries of $x^*$ are different to $\frac{1}{2}$, then $\partial h(x^*,y^*)$ is reduced to a singleton $\{\nabla h(x^*,y^*)\}$. Based on Theorem \ref{thm:1}, $(x^*,y^*)$ is a critical point, then $$\partial g(x^*,y^*) \cap \partial h(x^*,y^*) =  \{\nabla h(x^*,y^*)\} \subset \partial g(x^*,y^*) = \mathcal{N}_{\mathcal{K}}(x^*,y^*),$$ we get  
	\begin{equation}
	\label{E:1}
	\langle \nabla (-h)(x^*,y^*),(x,y)-(x^*,y^*) \rangle \geq 0, \;\forall (x,y)\in \mathcal{K}.
	\end{equation} 
	By the local convexity (locally affine) of $-h$ at $(x^*,y^*)$, there exists a neighborhood $U$ of $(x^*,y^*)$ such that 
	\begin{equation}
	\label{E:2}
	(-h)(x,y) \geq (-h)(x^*,y^*) + \langle \nabla (-h)(x^*,y^*),(x,y)-(x^*,y^*)\rangle,\;\forall (x,y)\in U.
	\end{equation}
	Combining \eqref{E:1} and \eqref{E:2}, we obtain
	\begin{equation*}
	(-h)(x,y) \geq (-h)(x^*,y^*), \forall (x,y)\in U\cap \mathcal{K}.
	\end{equation*}
	Therefore, $(x^*,y^*)$ is a local minimizer of problem (\ref{Pt}). \qed
\end{proof}

Note that if there exists $x_i^*=1/2$, then the critical point $(x^*,y^*)$ obtained by DCA may not be a local minimizer. For example, let $f(x,y)=0, \forall (x,y)\in \K = [0,1/2]^n\times \{0_{\R^q}\}$, then at the initial point $(x^0,y^0)$ with $x^0 = (\frac{1}{2},\ldots,\frac{1}{2})$ and $y^0 = 0_{\R^q}$, if we take $z=(1,\ldots,1)$, then $(x^1,y^1)\in \argmin \{-t\sum_{i=1}^{n} x_i ~|~ (x,y)\in \K \}$ yields $(x^1,y^1) = (x^0,y^0)$, thus DCA terminates at $(x^0,y^0)$ which is a critical point of \eqref{Pt}, but a global maximizer (not a local minimzer) of \eqref{Pt}.

\section{DC cutting planes}\label{sec:DCcut}

In this section, we will carefully revisit the construction of DC cut and point out clearly that when DC cut is unconstructible. Then, for unconstructible case, we propose to use classical global cuts (such as the Lift-and-Project cut) in order to construct a theoretically provable DCCUT algorithm which will be discussed in next section.

\subsection{Valid inequalities}\label{sec:3.1}
For convenience, we will denote $u=(x,y)$. Let $u^* \in \K$ and $I= \{1,\cdots,n\}$, then two complement subsets of indices related to $u^*$ is defined by:
\[J_0(u^*) \defas \{j \in I:x_j^* \leq \frac{1}{2}\}; ~J_1(u^*) \defas I\setminus J_0(u^*).\]
An affine function defined at $u^*$ is given by $l_{u^*}: \R^{n} \rightarrow \R$ such that
$$l_{u^*}(x) = \sum\nolimits_{i \in J_0(u^*)} x_i + \sum\nolimits_{j \in J_1(u^*)} (1-x_j).$$
Note that we identify the notations $l_{u^*}(x)$ and $p(x)$ by $l_{u^*}(u)$ and $p(u)$. The relationships between $l_{u^*}$ and $p$ are given as follows:

\begin{lemma}\label{lemma:1}
	Let $u^* \in \mathcal{K}$ and $p(x)= \sum_{i=1}^{n} \min\{x_i,1-x_i\}$, then we have
	\begin{enumerate}[(i)]
		\item $l_{u^*}(u^*) = p(u^*)$.
		\item $l_{u^*}(u) \geq p(u) \geq 0$, $\forall u\in \mathcal{K}$.
		\item If $u^* \in \mathcal{S}$, then $l_{u^*}(u^*) = p(u^*) = 0.$ 
		\item If $u^*\in \S$, then $\forall u\in \S$ with $x\neq x^*$, we have $l_{u^*}(u)>0.$
	\end{enumerate}	
\end{lemma}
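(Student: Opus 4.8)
The plan is to verify all four items by a direct coordinate-by-coordinate comparison between the affine function $l_{u^*}$ and the piecewise-linear penalty $p$, the guiding idea being that $l_{u^*}$ simply fixes, for each index $i$, the branch of $\min\{x_i,1-x_i\}$ that is active at the reference point $u^*$. For (i) I would split the sum defining $p(u^*)$ over the index sets $J_0(u^*)$ and $J_1(u^*)$: on $J_0(u^*)$ we have $x_i^*\le\tfrac12$, so $\min\{x_i^*,1-x_i^*\}=x_i^*$, which is exactly the term that $l_{u^*}$ contributes; on $J_1(u^*)$ we have $x_i^*>\tfrac12$, so $\min\{x_i^*,1-x_i^*\}=1-x_i^*$, again matching the corresponding term of $l_{u^*}$. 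Summing the two groups gives $l_{u^*}(u^*)=p(u^*)$.

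For (ii), the inequality $p(u)\ge 0$ is immediate because $u\in\K$ forces $x\in[0,1]^n$, making every summand $\min\{x_i,1-x_i\}$ nonnegative. For $l_{u^*}(u)\ge p(u)$ I would again compare term by term: for any $u\in\K$ the $i$-th summand of $l_{u^*}$ equals $x_i$ (when $i\in J_0(u^*)$) or $1-x_i$ (when $i\in J_1(u^*)$), and in either case this quantity is at least $\min\{x_i,1-x_i\}$, the corresponding summand of $p$. The essential point is that the branch selected by $l_{u^*}$ is dictated by $u^*$ and need not be the branch that is active at $u$, which is precisely why $l_{u^*}$ dominates rather than equals $p$ once we move away from $u^*$. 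Item (iii) is then obtained by combining (i) with the fact that $u^*\in\S$ forces $x^*\in\{0,1\}^n$, so that every $\min\{x_i^*,1-x_i^*\}$ vanishes and $p(u^*)=0$.

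The most instructive step is (iv). Here I would observe that when both $u^*$ and $u$ lie in $\S$ their $x$-components are binary, so $J_0(u^*)=\{i:x_i^*=0\}$ and $J_1(u^*)=\{i:x_i^*=1\}$; substituting into the definition of $l_{u^*}$ shows that $l_{u^*}(u)=\sum_{i=1}^{n}|x_i-x_i^*|$, i.e., $l_{u^*}(u)$ is exactly the Hamming distance between the binary vectors $x$ and $x^*$. Since $x\neq x^*$ they differ in at least one coordinate, so this sum is at least $1$ and hence $l_{u^*}(u)>0$. Recognizing this Hamming-distance identity is the one nonroutine insight; once it is in hand the conclusion is immediate, and it also clarifies why $l_{u^*}$ is the natural object for building a DC cut that separates $x^*$ from every other feasible binary point. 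None of the steps presents a genuine obstacle, the only care required being the consistent handling of the boundary case $x_i^*=\tfrac12$ that is already built into the definition of $J_0(u^*)$ and $J_1(u^*)$.
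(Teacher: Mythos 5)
Your proof is correct and follows essentially the same route as the paper's: items (i)--(iii) are the same term-by-term branch comparison, and your Hamming-distance identity $l_{u^*}(u)=\sum_{i=1}^{n}|x_i-x_i^*|$ in (iv) is just a repackaging of the paper's computation, which partitions the differing indices $I$ into $I_0\subset J_0(u^*)$ and $I_1\subset J_1(u^*)$ and shows $l_{u^*}(u)=|I|>0$. No gaps; the two arguments are interchangeable.
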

\begin{proof}
$(i)$ By the definition of $l_{u^*}$ and $p$, we get immediately that 
$$l_{u^*}(u^*) = \sum_{i \in J_0(u^*)} x_i^* + \sum_{j \in J_1(u^*)} (1-x_j^*)= \sum_{i\in I} \min \{ x_i^*, 1-x_i^* \}= p(u^*).$$
$(ii)$ By the definition of $l_{u^*}$, $p$ and $\K \subset [0,1]^n\times [0,\bar{y}]$, we have 
$$\forall u\in \K,~ l_{u^*}(u) = \sum_{i \in J_0(u^*)} x_i + \sum_{j \in J_1(u^*)} (1-x_j)\geq \sum_{i\in I} \min \{ x_i, 1-x_i \}= p(u)\geq 0.$$
$(iii)$ If $u^*\in \S$, then $x^*\in \{0,1\}^n$ which implies $p(u^*) = 0$. It follows from $(i)$ the required equation $$l_{u^*}(u^*) = p(u^*) = 0.$$
$(iv)$ If $u^*\in \S$, then $\forall u\in \S$ with $x\neq x^*$, let $I$ be the set of all indices such that $\forall i\in I, x_i\neq x_i^*$, so that $|I|>0$. Let $I=I_0\cup I_1$ with $I_0\subset J_0(u^*)$ and $I_1\subset J_1(u^*)$. Then
$$l_{u^*}(u)  = \underbrace{\sum_{i \in I_0} x_i}_{=|I_0|} + \underbrace{\sum_{i \in J_0(u^*)\setminus I_0} x_i}_{=0} + \underbrace{\sum_{j \in I_1} (1-x_j)}_{=|I_1|} + \underbrace{\sum_{j \in J_1(u^*)\setminus I_1} (1-x_j)}_{=0} = |I| > 0. $$
    \qed
\end{proof}

\begin{remark}
    Note that $(ii)$ and $(iv)$ of Lemma \ref{lemma:1} provide valid inequalities at $u^*$.  
\end{remark}

Consider the concave minimization problem over $\K$ defined by:
\begin{equation}
\label{eq:Ptilde}
\min\{ p(x)~|~(x,y)\in \K \} \tag{$\widetilde{P}$}
\end{equation}
Problem \eqref{eq:Ptilde} is obviously DC and Proposition \ref{prop:1} is also true. Once a local minimizer of \eqref{eq:Ptilde} or \eqref{Pt} is verified (e.g., by Proposition \ref{prop:1}), then we have the following valid inequalities stated in Theorem \ref{thm:validineqbasedonq}, Lemma \ref{lemma:2} and Theorem \ref{thm:validineqbasedonPt}. 
\begin{theorem}
	\label{thm:validineqbasedonq}
	Let $u^* = (x^*,y^*)$ be a local minimizer of \eqref{eq:Ptilde}, then 
	\begin{equation}
	l_{u^*} (u) \geq l_{u^*}(u^*), \forall u\in \K.
	\end{equation}
\end{theorem}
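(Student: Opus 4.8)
The plan is to exploit the fact that $l_{u^*}$ is an \emph{affine} function, which is the crucial structural feature separating it from the concave penalty $p$. I would split the argument into two stages: first establish the desired inequality only \emph{locally}, on a neighborhood of $u^*$ intersected with $\K$, using Lemma \ref{lemma:1} together with the local minimality hypothesis; then upgrade this local inequality to the claimed \emph{global} inequality over all of $\K$ by invoking the affineness of $l_{u^*}$ and the convexity of $\K$.

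For the first stage, since $u^*$ is a local minimizer of \eqref{eq:Ptilde}, there is a neighborhood $U$ of $u^*$ with $p(u) \ge p(u^*)$ for every $u \in U \cap \K$. I would then chain this with the two facts from Lemma \ref{lemma:1}: parts $(ii)$ and $(i)$ give $l_{u^*}(u) \ge p(u)$ for all $u \in \K$ and $l_{u^*}(u^*) = p(u^*)$, respectively. Combining these, for every $u \in U \cap \K$ one obtains $l_{u^*}(u) \ge p(u) \ge p(u^*) = l_{u^*}(u^*)$, so $l_{u^*}$ attains a local minimum over $\K$ at $u^*$.

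The second stage is where the affine structure does the real work. Fix an arbitrary $u \in \K$ and consider the segment $u_\tau \defas (1-\tau)u^* + \tau u$, which lies in $\K$ for all $\tau \in [0,1]$ by convexity. For sufficiently small $\tau > 0$ we have $u_\tau \in U \cap \K$, so the local inequality gives $l_{u^*}(u_\tau) \ge l_{u^*}(u^*)$; on the other hand, affineness of $l_{u^*}$ yields $l_{u^*}(u_\tau) = (1-\tau)l_{u^*}(u^*) + \tau\, l_{u^*}(u)$. Substituting and cancelling shows $\tau\,(l_{u^*}(u) - l_{u^*}(u^*)) \ge 0$, whence $l_{u^*}(u) \ge l_{u^*}(u^*)$. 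Since $u \in \K$ was arbitrary, the theorem follows. The only genuinely delicate point is this local-to-global passage: it would fail for a general concave $p$, and it is precisely the affineness of $l_{u^*}$ (equivalently, that $l_{u^*}$ is both convex and concave) that lets a local minimum propagate to a global one over the convex set $\K$. Everything else reduces to directly quoting Lemma \ref{lemma:1}.
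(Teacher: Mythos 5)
Your proof is correct and follows essentially the same route as the paper: a local inequality obtained by chaining local minimality of $p$ with parts $(i)$ and $(ii)$ of Lemma \ref{lemma:1}, then a local-to-global upgrade using the structure of $l_{u^*}$ over the convex set $\K$. The only cosmetic difference is that the paper simply invokes convexity of $l_{u^*}$ for the last step (a local minimizer of a convex function over a convex set is global), whereas you spell this out with the segment argument via affineness — convexity alone would already suffice there, but your version is equally valid.
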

\begin{proof}
	Since $u^*$ is a local minimizer of $p$ over $\mathcal{K}$, then there exists an open ball $B(u^*;r)$ centered at $u^*$ with radius $r>0$ such that
	\[p(u^*) \leq p(u), \; \forall u \in \mathcal{K} \cap B(u^*,r). \]	
	One gets from Lemma \ref{lemma:1} that  $$l_{u^*}(u^*) = p(u^*) \text{ and } l_{u^*}(u) \geq p(u), \forall u\in \K,$$ then 
	\begin{align*}
	l_{u^*}(u^*) \leq l_{u^*}(u), \; \forall u \in \mathcal{K} \cap B(u^*,r),
	\end{align*}
	It follows from the convexity of $l_{u^*}$ that $l_{u^*}(u^*) \leq l_{u^*}(u), \; \forall u \in \mathcal{K}$. \qed
\end{proof}

\begin{lemma} 
	\label{lemma:2}
	$\forall t\geq 0$, let $u^*(t)$ be a local minimizer of (\ref{Pt}), then
	\begin{equation}
	\label{condition1}
	c^{\top}x^*(t) + d^{\top}y^*(t) + tl_{u^*(t)}(u^*(t)) \leq c^{\top}x + d^{\top}y + tl_{u^*(t)}(u),~\forall u \in \mathcal{K}.
	\end{equation}
\end{lemma}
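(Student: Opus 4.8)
The plan is to mimic the strategy of Theorem \ref{thm:validineqbasedonq}: first obtain the desired inequality \emph{locally} from the definition of local minimizer, then upgrade it to a \emph{global} inequality over $\K$ by exploiting the affineness of the objective together with the convexity of $\K$. Throughout I write $u^* = u^*(t)$ for brevity, and set $g(u) \defas c^{\top}x + d^{\top}y + t\, l_{u^*}(u)$, which is an \emph{affine} function of $u$ since both $f(u)=c^{\top}x+d^{\top}y$ and $l_{u^*}$ are affine.

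First I would invoke the hypothesis that $u^*$ is a local minimizer of \eqref{Pt}: there exists $r>0$ such that
\begin{equation*}
f(u^*) + t\, p(u^*) \leq f(u) + t\, p(u), \quad \forall u \in \K \cap B(u^*, r).
\end{equation*}
Next I would convert the $p$-terms into $l_{u^*}$-terms using Lemma \ref{lemma:1}. On the left, part $(i)$ gives $p(u^*) = l_{u^*}(u^*)$, so the left-hand side equals $f(u^*) + t\, l_{u^*}(u^*)$. On the right, part $(ii)$ gives $p(u) \leq l_{u^*}(u)$ for all $u \in \K$; here the assumption $t \geq 0$ is essential, as it is what preserves the inequality after multiplication, yielding $f(u) + t\, p(u) \leq f(u) + t\, l_{u^*}(u)$. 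Chaining these relations produces the local inequality
\begin{equation*}
g(u^*) = f(u^*) + t\, l_{u^*}(u^*) \leq f(u) + t\, p(u) \leq f(u) + t\, l_{u^*}(u) = g(u), \quad \forall u \in \K \cap B(u^*, r).
\end{equation*}

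Finally I would extend this from the ball to all of $\K$. Given any $\bar u \in \K$, convexity of $\K$ ensures the segment $[u^*, \bar u]$ lies in $\K$, so for sufficiently small $\lambda \in (0,1]$ the point $u^* + \lambda(\bar u - u^*)$ lies in $\K \cap B(u^*, r)$. Affineness of $g$ then gives $g(u^*) \leq g\big(u^* + \lambda(\bar u - u^*)\big) = g(u^*) + \lambda\big(g(\bar u) - g(u^*)\big)$, whence $g(\bar u) \geq g(u^*)$. As $\bar u \in \K$ was arbitrary, this is exactly \eqref{condition1}. Equivalently, one may simply reuse the convexity-based extension already carried out at the end of the proof of Theorem \ref{thm:validineqbasedonq}, applied to the affine (hence convex) function $g$.

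I do not anticipate a genuine obstacle here, since the argument is structurally identical to Theorem \ref{thm:validineqbasedonq}. The only point requiring care is tracking the direction of the inequality when passing from $p$ to $l_{u^*}$, which is precisely why the nonnegativity $t \geq 0$ must be used explicitly rather than silently; the rest is a routine local-to-global upgrade for an affine objective on a convex set.
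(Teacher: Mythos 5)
Your proof is correct and follows essentially the same route as the paper's: start from the local minimality inequality in terms of $p$, convert it to $l_{u^*(t)}$ via Lemma \ref{lemma:1} $(i)$--$(ii)$ using $t\geq 0$, and then pass from local to global optimality of the affine function $u \mapsto c^{\top}x + d^{\top}y + t\,l_{u^*(t)}(u)$ over $\K$. The only difference is that you spell out the local-to-global step with the explicit segment argument, whereas the paper simply invokes the fact that a local minimizer of a linear map over a convex set is a global minimizer.
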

\begin{proof}
	Since $u^*(t) = (x^*(t),y^*(t))$ is a local minimizer of problem  (\ref{Pt}), then there exists an open ball $B(u^*(t); r)$ (with $r>0$) such that
	\begin{equation}\label{eq:lemma2_ineq01}
	  c^{\top}x^*(t) + d^{\top}y^*(t) + tp(u^*(t)) \leq c^{\top}x + d^{\top}y + tp(u), \; \forall u \in \mathcal{K} \cap B(u^*(t); r).  
	\end{equation}
	Based on Lemma \ref{lemma:1}, we have $l_{u^*(t)}(u^*(t)) = p(u^*(t))$ and $l_{u^*(t)}(u) \geq p(u), \forall u\in \K$. Then it follows from $t\geq 0$ and \eqref{eq:lemma2_ineq01} that   
	\begin{align*}
	c^{\top}x^*(t) + d^{\top}y^*(t) + tl_{u^*(t)}(u^*(t)) \leq c^{\top}x + d^{\top}y + tl_{u^*(t)}(u), \; \forall u \in \mathcal{K} \cap B(u^*(t); r).
	\end{align*}
	So that $u^*(t)$ is a local minimizer of the linear mapping $u\mapsto c^{\top}x + d^{\top}y + tl_{u^*(t)}(u)$, thus it is also a global minimizer and 
	$$
	c^{\top}x^*(t) + d^{\top}y^*(t) + tl_{u^*(t)}(u^*(t)) \leq c^{\top}x + d^{\top}y + tl_{u^*(t)}(u),~\forall u \in \mathcal{K}.
	$$ 
	 \qed
\end{proof}
    
\begin{theorem} 
	\label{thm:validineqbasedonPt}
	There exists a finite number $t_1 \geq 0$ such that for any $t > t_1$ and for any $u^*(t)$ local minimizer of problem (\ref{Pt}) obtained by DCA, we have \begin{equation}
	    \label{eq:validineqbasedonPt}
	    l_{u^*(t)}(u) \geq l_{u^*(t)}(u^*(t)), \forall u\in \K.
	\end{equation}
	Let $V^+(w) = \{u \in V(\mathcal{K})~|~l_{w}(w) - l_{w}(u) >0 \},$
	$M = \max_{u\in V(\K)}\{c^{\top}x + d^{\top}y\} -  \min_{u\in V(\K)\setminus \S}\{c^{\top}x + d^{\top}y\},$
	and $\sigma = \min_{w\in V(\K)\setminus \S} \min_{v\in V^+(w)} \{ l_{w}(w)-l_{w}(v) \}.$ A large enough $t_1$ can be computed by \begin{equation}\label{eq:t1}
	     t_1 = \frac{M}{\sigma}.
	 \end{equation}
\end{theorem}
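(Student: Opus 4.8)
The plan is to reduce the asserted inequality \eqref{eq:validineqbasedonPt} to a finite comparison over the vertex set $V(\K)$ and then to use Lemma \ref{lemma:2} to derive a contradiction whenever $t > M/\sigma$. Two preliminary reductions drive the argument. First, since $l_{u^*(t)}$ is affine and $\K$ is compact polyhedral convex, the minimum of $l_{u^*(t)}$ over $\K$ is attained at a vertex; hence \eqref{eq:validineqbasedonPt} holds on all of $\K$ precisely when $l_{u^*(t)}(v) \geq l_{u^*(t)}(u^*(t))$ for every $v\in V(\K)$, i.e. precisely when $V^+(u^*(t)) = \emptyset$. Second, because DCA solves its Step 2 linear program by the simplex algorithm, the returned local minimizer is a vertex, $u^*(t)\in V(\K)$, so only the feasible and infeasible cases for $u^*(t)$ need be examined.

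The feasible case $u^*(t)\in\S$ is immediate: Lemma \ref{lemma:1}(iii) gives $l_{u^*(t)}(u^*(t)) = p(u^*(t)) = 0$, while Lemma \ref{lemma:1}(ii) gives $l_{u^*(t)}(u)\geq p(u)\geq 0$ for all $u\in\K$, so $V^+(u^*(t)) = \emptyset$ and \eqref{eq:validineqbasedonPt} holds for every $t\geq 0$.

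The substance lies in the infeasible case $u^*(t)\in V(\K)\setminus\S$, which I would treat by contradiction. Suppose some $v\in V^+(u^*(t))$ exists, so $l_{u^*(t)}(u^*(t)) - l_{u^*(t)}(v) > 0$. Applying Lemma \ref{lemma:2} at this vertex and rearranging gives
\[
t\bigl[\,l_{u^*(t)}(u^*(t)) - l_{u^*(t)}(v)\,\bigr] \;\leq\; f(v) - f(u^*(t)),
\]
where $f(x,y) = c^{\top}x + d^{\top}y$. Since $v\in V(\K)$ and $u^*(t)\in V(\K)\setminus\S$, the right-hand side is at most $M = \max_{u\in V(\K)} f(u) - \min_{u\in V(\K)\setminus\S} f(u)$, which is exactly why the definition of $M$ mixes a maximum over $V(\K)$ with a minimum over $V(\K)\setminus\S$. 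For the left-hand side, the decisive observation is that $l_w(w) - l_w(u)$ depends only on the $x$-coordinates and takes finitely many values as $(w,u)$ ranges over the finite set $V(\K)\times V(\K)$; hence its least strictly positive value $\sigma$ is attained and positive, and $l_{u^*(t)}(u^*(t)) - l_{u^*(t)}(v)\geq\sigma$. Combining the two bounds yields $t\sigma\leq M$, i.e. $t\leq M/\sigma = t_1$, contradicting $t > t_1$. Thus $V^+(u^*(t)) = \emptyset$, and by the vertex reduction \eqref{eq:validineqbasedonPt} holds throughout $\K$.

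The step I expect to be most delicate is guaranteeing $\sigma > 0$: this rests entirely on confining attention to vertices and on $l_w$ depending only on the finitely many sign patterns of the $x$-coordinates, so that the positive gaps $l_w(w) - l_w(u)$ cannot accumulate at $0$. I would also note the degenerate situation in which $V^+(w) = \emptyset$ for every infeasible vertex $w$; under the conventions $\min\emptyset = +\infty$ and $1/(+\infty) = 0$ one then takes $t_1 = 0$, and the conclusion is correct since no violating vertex exists for any $t \geq 0$.
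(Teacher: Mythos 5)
Your proposal is correct and follows essentially the same route as the paper's proof: reduce to the vertex set via the simplex/affinity argument, dispose of the feasible case with Lemma \ref{lemma:1}, and in the infeasible case combine Lemma \ref{lemma:2} with the bounds $f(v)-f(u^*(t))\leq M$ and $l_{u^*(t)}(u^*(t))-l_{u^*(t)}(v)\geq\sigma$ to force $t\leq M/\sigma$, which is exactly the paper's case (ii) phrased as a contradiction. Your explicit treatment of the degenerate situation where $V^+(w)=\emptyset$ for every infeasible vertex (so $\sigma=+\infty$ and $t_1=0$) is a small point the paper glosses over, but it does not change the substance of the argument.
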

\begin{proof}
	$\rhd$ If $u^*(t)\in \S$, then $l_{u^*(t)}(u^*(t))=0$ and we get from Lemma \ref{lemma:1} that $$l_{u^*(t)}(u) \geq 0 = l_{u^*(t)}(u^*(t)), ~\forall u\in \K.$$
	$\rhd$ Otherwise, $u^*(t)\notin \S$ and $l_{u^*(t)}(u^*(t))>0$. The simplex algorithm used in Step 2 of DCA assumes that $u^*(t)\in V(\K)$. Then $u^*(t)\in V(\K) \setminus \S$. Let $$ V^+(w) = \{u \in V(\mathcal{K})~|~l_{w}(w) - l_{w}(u) >0 \}.$$ 
	$(i)$ If $V^+(u^*(t))= \emptyset$, then $l_{u^*(t)}(u^*(t))\leq l_{u^*(t)}(u), \forall u\in V(\K)$, which implies (by linearity of $l_{u^*(t)}$ and $\K$) that $$l_{u^*(t)}(u^*(t))\leq l_{u^*(t)}(u), \forall u\in \K.$$
	$(ii)$ Otherwise, $V^+(u^*(t))\neq \emptyset$. Let  
	\begin{equation}
	    \label{eq:M}
	    M = \max_{u\in V(\K)}\{c^{\top}x + d^{\top}y\} -  \min_{u\in V(\K)\setminus \S}\{c^{\top}x + d^{\top}y\};
	\end{equation}
	\begin{equation}
	    \label{eq:sigma}
	    \sigma = \min_{w\in V(\K)\setminus \S} \min_{v\in V^+(w)} \{ l_{w}(w)-l_{w}(v) \}.
	\end{equation}
	Clearly, $0\leq M<+\infty$ and $0<\sigma <+\infty$. It follows from Lemma \ref{lemma:2} that 
	\begin{align*}
	  0< \sigma &\leq \min_{v\in V^+(u^*(t))} \{ l_{u^*(t)}(u^*(t))-l_{u^*(t)}(v) \} \\
	  &\leq l_{u^*(t)}(u^*(t))-l_{u^*(t)}(u), \forall u\in V^+(u^*(t)) \\
	  &\overset{\eqref{condition1}}{\leq}  \frac{c^{\top}x + d^{\top}y -  c^{\top}x^*(t) - d^{\top}y^*(t)}{t},~ \forall u\in V^+(u^*(t))\\
	  &\leq \frac{\max_{u\in V(\K)}\{c^{\top}x + d^{\top}y\} -  \min_{u\in V(\K)\setminus \S}\{c^{\top}x + d^{\top}y\}}{t} \leq  \frac{M}{t},
	\end{align*}
	which means
	$$0< t \leq \frac{M}{\sigma}<+\infty.$$
	Note that $M$ and $\sigma$ do not depend on $u^*(t)$ and $t$, so that if we take $t>\frac{M}{\sigma}=t_1$, then the assumption $V^+(u^*(t))\neq \emptyset$ will never be held and we have $V^+(u^*(t))= \emptyset$ which implies (by the case $(i)$) that $$l_{u^*(t)}(u^*(t))\leq l_{u^*(t)}(u), \forall u\in \K.$$ \qed
\end{proof}
\begin{remark}
	In the proof of Theorem \ref{thm:validineqbasedonPt}, we can compute $t_1=\frac{M}{\sigma}$ which is not difficult when the set $V(\K)$ is known.  Otherwise, the computation of $\sigma$ requires solving a nonconvex optimization problem \eqref{eq:sigma} which is in general intractable in practice. Note that if $t$ is not large enough, then the inequality \eqref{eq:validineqbasedonPt} may not be valid. A counterexample is given as follows:
\end{remark}
\begin{example}
	\label{eg:1}
	Consider the binary linear program with three binary variables:
	\begin{equation}
	\label{Ex-A}
	\begin{split}
	\min &\quad f(x) \defas -2x_1 - x_2 -x_3 \\
	\text{s.t.}& \quad  x\in \S = \K\cap \{0,1\}^3
	\end{split}
	\tag{Ex-A}
	\end{equation}
	where $\K = \{x\in [0,1]^3 ~|~ -3x_1 +  x_2 - 3x_3 \geq -3; - 3 x_1 -3 x_2 + x_3 \geq -3; -2x_1 -3x_2 \geq -3; -2x_1 - 3x_3 \geq -3\}$. The optimal solution is known as $(0,1,1)$ and $V(\K)=\{(0,0,0),(1,0,0),(0,1,0),(0,0,1),(0,1,1),(0.6,0.6,0.6)\}$. One can thus compute $\alpha_0=-2.4$, $m=1.2$, $\min_{x\in \K} f(x) = -2$ and $t_0=\frac{1}{3}$ %$t_0 = \frac{17}{60}$ 
	according to the formulation \eqref{eq:t0} in exact penalty Theorem \ref{thm:exactpenalty}. Now, let us take $t=1>t_0$, then an equivalent DC formulation of \eqref{Ex-A} (based on Theorem \ref{thm:exactpenalty}) is given by 
	\begin{equation}
	\label{Ex-At}
	\min \{f(x)+ t
	\sum_{i=1}^{3}\min\{x_i,1-x_i\} ~|~ x\in \K\}.
	\tag{Ex-At}
	\end{equation}
	Applying DCA for \eqref{Ex-At} with $t=1$ from initial point $x^0 = (0.6,0.6,0.6)$ will terminate at $x^0$ immediately, so that $x^0$ is a critical point which is also a local minimizer of \eqref{Ex-At} based on Proposition \ref{prop:1}. We will get the inequality \eqref{eq:validineqbasedonPt} at $x^0$ as $$x_1 + x_2 + x_3 \leq 1.8,$$ which is not valid for $(0,1,1)\in \K$. Therefore, $t=1$ is not large enough to get a valid inequality \eqref{eq:validineqbasedonPt} for $\K$ at $x^0$.
	
	Note that if $t$ is large enough, DCA will not terminate at $x^0$ any more, e.g., we can compute with $V(\K)$ the values of $M=2.4$, $\sigma=0.2$ and $t_1=\frac{M}{\sigma}=12$. Let us choose $t=13> \max\{t_0,t_1\}$, then starting DCA from initial point $x^0$ to problem \eqref{Ex-At}, we will not stop at $x^0$, but generate a sequence $\{x^k\}$ converging to $(0,1,1)$. The limit point is obviously a local minimizer for \eqref{Ex-At} based on Proposition \ref{prop:1}, and we obtain from Theorem \ref{thm:validineqbasedonPt} a valid inequality 
	$x_1-x_2-x_3\geq -2$ for $\K$ at $(0,1,1)$.  \qed
\end{example}

\subsection{DC cut at feasible critical point}\label{sec:3.2}

If a critical point $u^*$ of \eqref{Pt} obtained by DCA is included in $\S$, then we call that $u^*$ is a \emph{feasible critical point}; otherwise, it is an \emph{infeasible critical point}. In this subsection, we will establish a cutting plane  at feasible critical point $u^*$ by preserving all better feasible solutions in $\S$. 
\begin{theorem} \label{thm:dccut-type-I}
	Let $u^*\in\mathcal{S}$ be a feasible critical point of (\ref{Pt}) returned by DCA Algorithm \ref{algo:DCA}, then the inequality 
	\begin{equation} \label{Feasi1}
	l_{u^*}(u) \geq 1
	\end{equation}
	cuts off $u^*$ by preserving all better feasible solutions than $u^*$ in $\S$ \footnote{A better feasible solution than $u^*$ in $\S$ is a feasible solution in $\S$ whose objective value is smaller than $f(u^*)$.}. This cut is called the \textit{type-I DC cut} (cf. \dccuttypeI) at $u^*$.
\end{theorem}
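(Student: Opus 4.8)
The plan is to split \eqref{Feasi1} into two claims: that it is violated at $u^*$ (so $u^*$ is cut off), and that it holds at every feasible point strictly better than $u^*$ (so all such points are preserved). The first claim is immediate: since $u^* \in \mathcal{S}$, part $(iii)$ of Lemma \ref{lemma:1} gives $l_{u^*}(u^*) = p(u^*) = 0 < 1$, so $u^*$ fails \eqref{Feasi1}.

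For the second claim, the first thing I would establish is that $u^*$ is a genuine local minimizer of \eqref{Pt}, not merely a critical point: feasibility forces $x^* \in \{0,1\}^n$, hence $x_i^* \neq 1/2$ for every $i$, so Proposition \ref{prop:1} applies. This unlocks Lemma \ref{lemma:2} at $u^*$; combined with $l_{u^*}(u^*) = 0$, the inequality \eqref{condition1} collapses to
$$f(u^*) \leq f(u) + t\, l_{u^*}(u), \qquad \forall u \in \mathcal{K}.$$
I would then take an arbitrary better feasible solution $u \in \mathcal{S}$, i.e. $f(u) < f(u^*)$, and substitute it into this inequality to get $t\, l_{u^*}(u) \geq f(u^*) - f(u) > 0$, so that $l_{u^*}(u) > 0$ since $t > 0$.

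The hard part, and the only genuinely nontrivial step, is upgrading $l_{u^*}(u) > 0$ to the required $l_{u^*}(u) \geq 1$; without this upgrade the cut would be invalid. The mechanism is an integrality argument: since $u \in \mathcal{S}$ has $x \in \{0,1\}^n$, every summand $x_i$ (for $i \in J_0(u^*)$) and $1-x_j$ (for $j \in J_1(u^*)$) defining $l_{u^*}(u)$ lies in $\{0,1\}$, so $l_{u^*}(u)$ is a nonnegative integer, in fact the Hamming distance between $x$ and $x^*$ computed in the proof of Lemma \ref{lemma:1}$(iv)$; and a positive integer is at least $1$.

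Finally, it is worth noting that the same displayed inequality quietly rules out the dangerous scenario of a strictly better feasible point sharing the binary part $x^*$, for which $l_{u^*}$ would vanish and \eqref{Feasi1} would fail: any such $u$ has $l_{u^*}(u) = 0$, forcing $f(u^*) \leq f(u)$ and hence contradicting strict improvement. This confirms that \eqref{Feasi1} removes $u^*$ while retaining every feasible solution that beats it.
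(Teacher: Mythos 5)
Your proof is correct, but it takes a different route from the paper's. Both arguments begin the same way: $l_{u^*}(u^*)=0<1$ cuts off $u^*$, and Proposition \ref{prop:1} (via $x^*\in\{0,1\}^n$, so no entry equals $\tfrac12$) upgrades the critical point to a genuine local minimizer. From there you invoke Lemma \ref{lemma:2}: the penalized valid inequality \eqref{condition1} with $l_{u^*}(u^*)=0$ gives $f(u^*)\leq f(u)+t\,l_{u^*}(u)$ on all of $\K$, so any feasible $u$ with $f(u)<f(u^*)$ has $l_{u^*}(u)>0$ (here you quietly need $t>0$, which holds since $t>t_0\geq 0$), and integrality of $l_{u^*}$ on $\S$ lifts this to $l_{u^*}(u)\geq 1$. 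The paper never touches Lemma \ref{lemma:2}; instead it partitions $\S$ via the set $\C_1=\{(x,y)\in\K\mid x=x^*\}$, shows the cut removes exactly $\C_1$ (where $l_{u^*}$ vanishes), proves $\C_1$ contains no better solution by restricting \eqref{Pt} to $\C_1$ — where $p\equiv 0$ reduces it to a linear program, so the local minimizer $u^*$ is a global minimizer of $f$ over $\C_1$ — and preserves all of $\S\setminus\C_1$ by Lemma \ref{lemma:1}$(iv)$ plus integrality. Your argument is more compact and handles all strictly better feasible points uniformly with one inequality; the paper's argument buys more structural information, namely that everything the cut removes lies in $\C_1$ and that every point of $\S\setminus\C_1$ survives (better than $u^*$ or not), which is exactly what supports the remark after the theorem about which feasible solutions a type-I cut can eliminate.
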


\begin{proof}
	Since $u^*\in \S$, based on Lemma \ref{lemma:1} $(ii)$ and $(iii)$, we have the valid inequality $$l_{u^*}(u)\geq 0=l_{u^*}(u^*),~ \forall u\in \K.$$ Thus, $u=u^*$ does not hold the inequality $l_{u^*}(u)\geq 1$. Now, we will show that the inequality \eqref{Feasi1} holds for all better solutions than $u^*$ in $\S$: \\
	$\rhd$ Let us denote $\mathcal{C}_1\defas\{(x,y)\in \K~|~x=x^* \}$. Clearly, $\mathcal{C}_1\subset \S$. It follows from Lemma \ref{lemma:1} that $\forall u\in \mathcal{C}_1,\;l_{u^*}(u) = l_{u^*}(x^*) = 0$, so that the inequality $l_{u^*}(u)\geq 1$ cuts off $\mathcal{C}_1$ from $\S$. \\
	$\rhd$ Next, we can prove that $\C_1$ does not contain any better feasible solution than $u^*$. As $x^*\in \{0,1\}^n$, so that $u^*$ is a local minimizer of problem \eqref{Pt} based on Proposition \ref{prop:1}. Therefore, $\exists \mathcal{V}(u^*)$ a neighborhood of $u^*$ such that $\forall u\in \mathcal{V}(u^*)\cap \K$, we have 
	$$f(u^*)+tp(x^*) \leq f(u) + tp(u).$$
	Therefore, it follows from $\C_1\subset \S \subset \K$ that $f(u^*)+tp(x^*) \leq f(u) + tp(u), \forall u\in \mathcal{V}(u^*)\cap \C_1$, which implies that $u^*$ is also a local minimizer of problem
	\begin{equation}
	\min \{f(u) + tp(x) ~|~u\in \mathcal{C}_1 \}.
	\label{$P_1$}
	\end{equation}
	Based on the fact that $p(u)=0$ over $\C_1$, we can reduce problem \eqref{$P_1$} as a linear program $\min_{u\in \C_1} f(u)$. Then $u^*$ is a local minimizer of the linear program implies that $u^*$ is a global minimizer, thus $$f(u) \geq f(u^*),\;\forall u\in \mathcal{C}_1.$$ We conclude that $\C_1$ contains no better feasible solution than $u^*$. \\
	$\rhd$ On the other hand, $\forall u \in \mathcal{S}\setminus \mathcal{C}_1$, the inequality $l_{u^*}(u) \geq 1$ holds since $l_{u^*}(u)> 0$ (Lemma \ref{lemma:1} $(iv)$) and $l_{u^*}(u)\in \Z$. \\
	Consequently, the inequality $l_{u^*}(u) \geq 1$ cuts off $u^*$ (as well as $\C_1$) from $\S$ and contains all better feasible solutions than $u^*$ in $\mathcal{S}$. \qed
\end{proof}

\begin{remark}
	\begin{enumerate}
		\item The type-I DC cut will cut off at least one feasible solution $u^*$ in $\S$, therefore, it is a kind of \emph{local cut}. Note that this cut is possible to cut off some other feasible solutions in $\S$ (e.g., $\C_1$). But it is guaranteed in Theorem \ref{thm:dccut-type-I} that all removed feasible solutions can not be better than $u^*$.   
		\item If problem (\ref{Pt}) is a pure binary linear program, then the type-I DC cut will cut off only $u^*$ from $\mathcal{S}$ since $\C_1$ is reduced to the singleton $\{u^*\}$.
	\end{enumerate}	 
\end{remark}

The next Example \ref{eg:2} illustrates the type-I DC cut constructed at feasible critical points.
\begin{example}
	\label{eg:2}
	Consider the 2-dimensional binary linear program
	\begin{equation}
	\label{b}
	\min \{ f(x) \defas -x_1 - x_2 ~|~ x\in\S = \{0,1\}^2\cap \K\}
	\tag{Ex-B}
	\end{equation}
	where $\K=\{x\in [0,1]^2 ~|~  -4x_1 + 12 x_2 \geq -1; - 12 x_1 -4 x_2 \geq -13\}$. The vertex set $V(\K) = \{(0,0),(0,1),(0.25,0),(0.75,1),(1,0.25)\}$. According to the exact penalty Theorem \ref{thm:exactpenalty}, one can easily compute from \eqref{eq:t0} that $\alpha_0 = -1.75$, $m=0.25$, $\min_{x\in \S} f(x)=-1$ and $t_0=3$. Let us take $t = 4>t_0$, we get the equivalent DC formulation (\ref{b2}) as
	\begin{equation}
	\label{b2}
	\min_{x\in \K}  f(x) + t\sum_{i=1}^{2}\min\{x_i,1-x_i\} 
	\tag{Ex-Bt}
	\end{equation}
	There are two feasible critical points $(0,0)$ and $(0,1)$ of problem \eqref{b2}. Fig \ref{fig:1} illustrates the $\dccuttypeI$ constructed at $(0,0)$ and $(0,1)$ respectively. 
	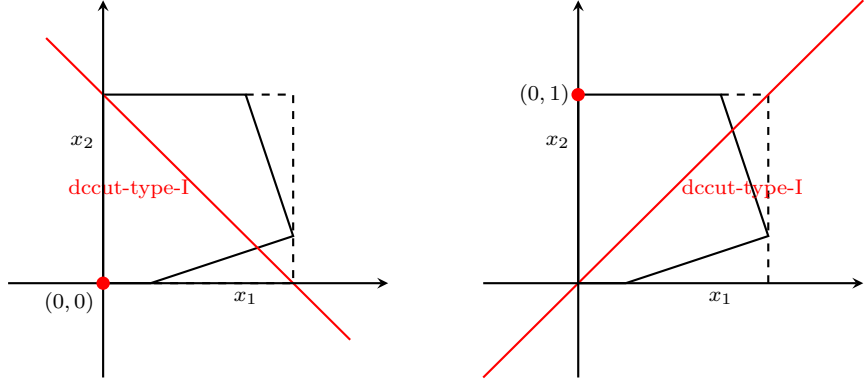
\begin{figure}[ht]
	\vskip -0.1in
		\caption{$\dccuttypeI$ at feasible critical points $(0,0)$ and $(0,1)$}
		\label{fig:1}
		\begin{tikzpicture}[thick,scale=2.5]
		\coordinate (A-03-03) at (-0.3,-0.3);
		\coordinate (A00) at (0, 0);
		\coordinate (A10) at (1,0);
		\coordinate (A01) at (0, 1);
		\coordinate (A11) at (1, 1);
		\coordinate (A0105) at (0,1.5);
		\coordinate (A1050) at (1.5,0);
		\coordinate (A0250) at (0.25, 0);
		\coordinate (A1025) at (1,0.25);
		\coordinate (A0751) at (0.75,1);
		\coordinate (A0-05) at (0,-0.5);
		\coordinate (A-050) at (-0.5,0);
		
		\coordinate (A-05105) at (-0.3,1.3);
		\coordinate (A105-05) at (1.3,-0.3);
		\coordinate (A-04103) at (-0.4,1.3);
		\coordinate (A10330) at (1.3333,0);
		\coordinate (A075125) at (0.75,1.25);
		
		\draw[solid] (A00) -- (A01) -- (A0751) -- (A1025) -- (A0250) -- (A00);
		\draw[dashed] (A0751) -- (A11) -- (A1025) -- (A10) -- (A0250);
		\draw[solid] (A0-05) -- (A00) -- (A-050);
		\draw[red] (A-05105) --node[anchor=east]{dccut-type-I} (A105-05);

		\draw [arrow](A00) -- node[anchor=east]{$x_2$} (A0105);
		\draw [arrow](A00) -- node[anchor=north]{$x_1$} (A1050);
		\draw (A00) node[anchor=north east]{$(0,0)$};
		\fill[red] (0,0) circle (1pt);
				
		\coordinate (B-050) at (2,0);
		\coordinate (B00) at (2.5, 0);
		\coordinate (B111) at (2.2,1);
		\coordinate (B0-025) at (2.5, -0.25);
		\coordinate (B0-05) at (2.5,-0.5);
		\coordinate (B10) at (3.5,0);
		\coordinate (B01) at (2.5, 1);
		\coordinate (B11) at (3.5, 1);
		\coordinate (B0105) at (2.5,1.5);
		\coordinate (B1050) at (4,0);
		\coordinate (B0250) at (2.75, 0);
		
		\coordinate (B1025) at (3.5,0.25);
		\coordinate (B0751) at (3.25,1);
		\coordinate (B0-05) at (2.5,-0.5);
		
		\coordinate (B-05105) at (2.2,1.3);
		\coordinate (B105-05) at (3.8,-0.3);
		\coordinate (B-04103) at (2.1,1.3);
		\coordinate (B10330) at (3.8333,0);
		
		\coordinate (B-1-1) at (2,-0.5);
		\coordinate (B105105) at (4,1.5);
		\coordinate (B1105) at (3.5,1.5);
		\coordinate (B-025-1) at (2.25,-1);
		
		\coordinate (B4025) at (3.5,0.25);
		\coordinate (B125025) at (3.75,0.25);
		
		\draw[solid] (B00) -- (B01) -- (B0751) -- (B1025) -- (B0250) -- (B00);
		\draw[dashed] (B0751) -- (B11) -- (B1025) ;
		\draw[dashed] (B10) -- (B4025);
		\draw[red] (B-1-1) --node[anchor=west]{dccut-type-I} (B105105);

		\draw[solid] (B-050) -- (B00) -- (B0-05); 
		\draw [arrow](B00) -- node[anchor=east]{$x_2$} (B0105);
		\draw [arrow](B00) -- node[anchor=north]{$x_1$} (B1050);
		\draw (B01) node[anchor=east]{$(0,1)$};
		\fill[red] (2.5,1) circle (1pt);
		\end{tikzpicture}
	\end{figure}
	\vskip -0.1in
\end{example}

\subsection{DC cut at infeasible critical point}\label{sec:3.3} 
Let $u^*\in V(\K)\setminus \S$ be an infeasible critical point of problem \eqref{Pt}. Based on the equivalence between \eqref{Pt} and \eqref{MBLP} for large enough $t$, if $u^*\in V(\K)\setminus \S$, then $u^*$ must not be a global minimizer of problem (\ref{Pt}), and even not assumed to be a local minimizer of \eqref{Pt} if some entries of $x^*$ equal to $\frac{1}{2}$. In this case, since the local optimality of $u^*$ can not be guaranteed, we can not apply the valid inequalities in Theorem \ref{thm:validineqbasedonq} or \ref{thm:validineqbasedonPt} to construct a DC cut at $u^*$. 

In this subsection, we will first discuss about the case when $u^*$ can be guaranteed as a local minimizer, then a DC cut based on Theorem \ref{thm:validineqbasedonq} or \ref{thm:validineqbasedonPt} can be easily constructed; Otherwise, we propose to introduce classical global cuts instead (such as Lift-and-Project cut and Gomory's cut). Therefore, we can always introduce cutting planes from $u^*$ by preserving better feasible solutions.    

Now, let us assume that $u^*\in V(\K)\setminus \S$ can be guaranteed as a local minimizer of \eqref{Pt} (e.g., based on Proposition \ref{prop:1}), then the next theorem provides a DC cut at $u^*$ for $\mathcal{S}$.
\begin{theorem} \label{thm:dccut-type-II}
	For any $t> t_1$ given by Theorem \ref{thm:validineqbasedonPt} and suppose that $u^*(t) \in V(\K)\setminus \mathcal{S}$ be a local minimizer of (\ref{Pt}) obtained by DCA satisfying $p(u^*(t))\notin \Z$, then the inequality   
	\begin{equation} \label{Inf_Noin}
	l_{u^*(t)}(u) \geq \lceil l_{u^*(t)}(u^*(t))\rceil \footnote{For $x\in \R$, the expression $\lceil x \rceil$ indicates the ceiling of $x$, i.e., the least integer greater than or equal to $x$.}
	\end{equation}
	cuts off $u^*(t)$ and holds for all points $u\in \mathcal{S}$. This cut is called the \emph{type-II DC cut} (cf. \dccuttypeII) at $u^*(t)$.
\end{theorem}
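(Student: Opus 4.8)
The plan is to recognize the claimed inequality as a Chv\'atal--Gomory-style integer rounding of the \emph{unrounded} valid inequality that Theorem \ref{thm:validineqbasedonPt} already supplies. Accordingly I would split the argument into two independent parts: first showing the cut is valid for every $u\in\S$, and then showing that it separates the infeasible point $u^*(t)$.

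For validity, I would invoke Theorem \ref{thm:validineqbasedonPt} directly: since $t>t_1$ and $u^*(t)$ is a local minimizer of \eqref{Pt} returned by DCA, the base inequality $l_{u^*(t)}(u)\geq l_{u^*(t)}(u^*(t))$ holds for all $u\in\K$, hence in particular for all $u\in\S\subset\K$. The real engine of the proof is the observation that $l_{u^*(t)}$ takes only integer values on $\S$: for any $u\in\S$ the vector $x$ lies in $\{0,1\}^n$, so each summand $x_i$ (for $i\in J_0(u^*(t))$) and each summand $1-x_j$ (for $j\in J_1(u^*(t))$) equals $0$ or $1$, whence $l_{u^*(t)}(u)\in\Z$. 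An integer that is at least $l_{u^*(t)}(u^*(t))$ is necessarily at least $\lceil l_{u^*(t)}(u^*(t))\rceil$, which yields \eqref{Inf_Noin} on all of $\S$.

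To see that the cut removes $u^*(t)$, I would use Lemma \ref{lemma:1}(i) to write $l_{u^*(t)}(u^*(t))=p(u^*(t))$, combined with the standing hypothesis $p(u^*(t))\notin\Z$. Since the ceiling of a non-integer is strictly larger than the number itself, $l_{u^*(t)}(u^*(t))=p(u^*(t))<\lceil p(u^*(t))\rceil=\lceil l_{u^*(t)}(u^*(t))\rceil$, so $u^*(t)$ strictly violates \eqref{Inf_Noin} and is therefore cut off.

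I do not expect a genuine obstacle here, since all the heavy lifting — namely the threshold $t_1$ guaranteeing the unrounded inequality over all of $\K$ — is already carried out in Theorem \ref{thm:validineqbasedonPt}. The only point requiring care is the integrality of $l_{u^*(t)}$ on $\S$, which is precisely what licenses rounding the right-hand side up to the ceiling and is simultaneously what the hypothesis $p(u^*(t))\notin\Z$ exploits to force strict separation. It is worth stressing that this integrality holds only on $\S$ and not on all of $\K$, so the rounding step cannot be performed before restricting to feasible points; this is exactly why \eqref{Inf_Noin} is asserted as valid for $\S$ rather than for $\K$, and why the hypothesis $p(u^*(t))\notin\Z$ (rather than $p(u^*(t))>0$ alone) is what makes the rounding strict.
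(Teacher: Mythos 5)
Your proposal is correct and follows essentially the same route as the paper's own proof: separation of $u^*(t)$ via Lemma \ref{lemma:1}(i) together with $p(u^*(t))\notin\Z$, and validity on $\S$ by combining the unrounded inequality from Theorem \ref{thm:validineqbasedonPt} with the integrality of $l_{u^*(t)}$ on $\S$. The only difference is that you spell out explicitly why $l_{u^*(t)}(u)\in\Z$ for $u\in\S$, a point the paper's proof treats as immediate.
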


\begin{proof}
	Firstly, it follows from Lemma \ref{lemma:1} $(i)$ that $l_{u^*(t)}(u^*(t)) = p(u^*(t))$, then $p(u^*(t))\notin \Z$ implies $l_{u^*(t)}(u^*(t)) <  \lceil l_{u^*(t)}(u^*(t))\rceil$, thus $u^*(t)$ violates inequality (\ref{Inf_Noin}). Secondly, Theorem \ref{thm:validineqbasedonPt} implies that $l_{u^*(t)}(u) \geq l_{u^*(t)}(u^*(t)), \forall u\in \K$. Particularly, $\forall u \in \mathcal{S}\subset \K$, we have $\Z \ni l_{u^*(t)}(u)\geq l_{u^*(t)}(u^*(t))=p(u^*(t))\notin \Z$, then $l_{u^*(t)}(u) \geq \lceil l_{u^*(t)}(u^*(t))\rceil,~\forall u \in \mathcal{S}.$ \qed
\end{proof} 

\begin{remark}
	\label{rem:3}
	\begin{enumerate}
		\item The three assumptions required in Theorem \ref{thm:dccut-type-II}: (i) $t$ is large enough; (ii) $u^*(t)\in V(\K)\setminus \S$ is a local minimizer of \eqref{Pt}; (iii) $p(u^*(t))\notin \Z$, are non-negligible for constructing the type-II DC cut. Otherwise, if $t$ is not large enough and/or $u^*(t)$ is not a local minimizer of \eqref{Pt}, we observe in Example \ref{eg:1} that the inequality $l_{u^*(t)}(u) \geq l_{u^*(t)}(u^*(t))$ may not be valid for $\K$, and the $\dccuttypeII$ \eqref{Inf_Noin} may not be valid neither; If $p(u^*(t)) \in \Z$, then the inequality (\ref{Inf_Noin}) is not a cut at $u^*(t)$ since it contains $u^*(t)$. 
		\item Different to $\dccuttypeI$ which is a \emph{local cut}, the $\dccuttypeII$ is a \emph{global cut} which will not cut off any feasible point in $\S$. 
		\item We can also construct a $\dccuttypeII$ from a local minimizer $u^*$ of problem \eqref{eq:Ptilde} obtained by DCA satisfying $u^*\in V(\K)\setminus \S$ and $p(u^*) \notin \Z$. This DC cut is valid for $\mathcal{S}$ and cuts off $u^*$. 
	\end{enumerate}	
\end{remark}

Now, let $t>t_1$, for any $u^*\in V(\K)\setminus \S$ infeasible critical point obtained by DCA for \eqref{Pt} or \eqref{eq:Ptilde}, there are two following cases: 
\paragraph{\textbf{Case 1: If all entries of $x^*$ are not $\frac{1}{2}$ and $p(u^*) \notin \Z$.}}
Based on Proposition \ref{prop:1}, $u^*$ is a local minimizer of \eqref{Pt} or \eqref{eq:Ptilde}. Moreover with $t>t_1$ and $p(u^*)\notin\Z$, it follows from Theorem \ref{thm:dccut-type-II} that we have a type-II DC cut at $u^*$.

\paragraph{\textbf{Case 2: Otherwise (i.e., if any entry of $x^*$ is  $\frac{1}{2}$ or $p(u^*) \in \Z$).}} The assumptions of Theorem \ref{thm:dccut-type-II} are not verified, so that we can not construct a type-II DC cut at $u^*$ using Theorem \ref{thm:dccut-type-II}.

Constructing a DC cut for Case 2 is difficult and still be an open question in general. In some existing works \cite{Nguyen2006,Quang2010,Babacar2012}, a complicated procedure, namely \emph{Procedure P}, is proposed to search a DC cut for a special situation of Case 2. Procedure P performs as a branch-and-bound to search a point $z\neq u^*$ and $z\in\S$ verifying $l_{u^*}(z)=l_{u^*}(u^*).$ If $z$ does not exist, then the $\dccuttypeII$ can be constructed as the inequality \eqref{Inf_Noin} which will cut off $u^*$ without eliminating any feasible point (such as $z$) in $\S$. In all other cases, we still have no idea how to construct a DC cut properly. 

Unfortunately, the Procedure P performs, in worst cases, as a full branch-and-bound to solve a mixed-integer linear program just for constructing one DC cut, which is obviously unexpected due to its exponential complexity. Moreover, we may also fail to construct a DC cut when such a point $z$ does exist, and this bad case quite often occurs in many large-scale real-world applications. Therefore, the proposed DCA-CUT algorithm in those papers will be blocked in this case, and could be very inefficient even it works.

In next subsection, we will discuss about using classical global cuts, e.g., Lift-and-Project cut, to provide cutting planes in Case 2. 

\subsection{Lift-and-Project cut}
In this subsection, we are not intended to establish a new DC cut in Case 2 (which is quite sophisticated and almost impossible), but to propose some easily constructed classical global cuts for instead when they are applicable. Several possible choices are available, such as the Lift-and-Project (L\&P) cut, the Gomory's Mixed-Integer (GMI) cut and the Mixed-Integer Rounding (MIR) cut etc. Here, we will only briefly introduce the L\&P cut. The reader can refer to \cite{Balas1993,Marchand2002,Cornuejols2008} for more details about the L\&P cut and some other classical cuts. 

The idea of ``Lift-and-Project" is to consider problem \eqref{MBLP}, not in the original space, but in some space of higher dimension (lifting). Then valid inequalities
found in this higher dimensional space are projected back to the original space resulting in tighter mixed-integer programming formulations. A detailed procedure to produce an L\&P cut is described as follows.

\renewcommand{\algorithmcfname}{L\&P cut generation}
\begin{algorithm}[ht!]
	\caption{}
	\label{algo:LAP}
	\KwIn{$u^*\in V(\K)\setminus \mathcal{S}$.}
	\KwOut{L\&P cut.} 
	
	\textbf{Step 1: (Index selection)} Select an index $j \in \{1,\cdots,n\}$ such that $x_j^* \notin \Z$;
	
	\textbf{Step 2: (Cut generation)} 
	
	Set $C_j$ be an $m\times (n+q-1)$ matrix obtained from the matrix $[A | B]$ by removing the $j$-th column $a_j$;
	
	Set $\widetilde{D_j}$ be an $m\times (n+q)$ zero matrix with only $j$-th column being $a_j-b$;
	
	Set $\widetilde{C_j} \leftarrow [A|B] - \widetilde{D_j}$;	
	
	Solve the linear program:
	\begin{equation*}
	(w^*,v^*)\in\argmax\{ v^{\top}b - (w^{\top}\widetilde{D_j} + v^{\top}\widetilde{C_j})u^* ~|~w^{\top}C_j - v^{\top}C_j = 0, (w,v) \geq 0\}
	\end{equation*}
	An L\&P cut $({w^*}^{\top}\widetilde{D_j} + {v^*}^{\top}\widetilde{C_j})u \geq {v^*}^{\top}b$ is valid for $\mathcal{S}$ and cuts off $u^*$.
\end{algorithm}

	Note that an L\&P cut can be generated in theory at any $u^*\in V(\K)\setminus \S$ and with any index $j$ such that $x^*_j$ is fractional. Thus, one may generate several L\&P cuts at $u^*$ if there exists several fractional $x^*_j$. In practice, a deeper L\&P cut is preferred, which is often related to an index $j$ with maximum fractionality of $x^*_j$ (the closer to $0.5$ the better). Moreover, the minimum fractionality used to generate L\&P cuts is set to $0.001$ for numerical stability. Example \ref{eg:3} illustrates the differences between L\&P cut and DC cut.   
\begin{example}
	\label{eg:3}
	Consider the same problem in Example \ref{eg:2}, we have two infeasible critical points (also to be local minimizers) of problem (\ref{b2}) as $(0.75,1)$ and $(1,0.25)$ starting from which DCA will stop immediately at the starting point. The corresponding L\&P cuts and type-II DC cuts can be visualized in Fig \ref{fig:2}. 
	
	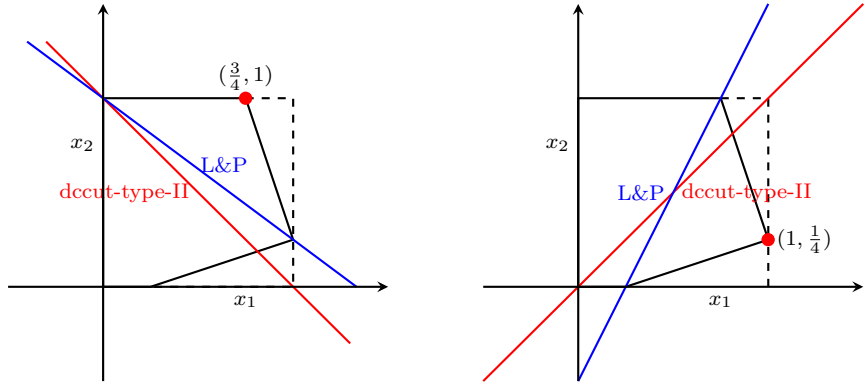
\begin{figure}[ht]
		\caption{$\dccuttypeII$ v.s. L\&P cut at infeasible local minima $(\frac{3}{4},1)$ and $(1,\frac{1}{4})$}
		\label{fig:2}
		\begin{tikzpicture}[thick,scale=2.5]
		\coordinate (A00) at (0, 0);
		\coordinate (A10) at (1,0);
		\coordinate (A01) at (0, 1);
		\coordinate (A11) at (1, 1);
		\coordinate (A0105) at (0,1.5);
		\coordinate (A1050) at (1.5,0);
		\coordinate (A0250) at (0.25, 0);
		\coordinate (A1025) at (1,0.25);
		\coordinate (A0751) at (0.75,1);
		\coordinate (A0-05) at (0,-0.5);
		\coordinate (A-050) at (-0.5,0);
		
		\coordinate (A-05105) at (-0.3,1.3);
		\coordinate (A105-05) at (1.3,-0.3);
		\coordinate (A-04103) at (-0.4,1.3);
		\coordinate (A10330) at (1.3333,0);
		\coordinate (A07510) at (0.75,1.0);
		
		\draw[solid] (A00) -- (A01) -- (A0751) -- (A1025) -- (A0250) -- (A00);
		\draw[dashed] (A0751) -- (A11) -- (A1025) -- (A10) -- (A0250);
		\draw[solid] (A0-05) -- (A00) -- (A-050);
		\draw[red] (A-05105) --node[anchor=east]{dccut-type-II} (A105-05);
		\draw[blue] (A-04103) -- node[anchor=west]{L\&P} (A10330);
		\fill[color=red] (0.75,1) circle (1pt);

		\draw [arrow](A00) -- node[anchor=east]{$x_2$} (A0105);
		\draw [arrow](A00) -- node[anchor=north]{$x_1$} (A1050);
		\draw (A07510) node[anchor=south]{$(\frac{3}{4},1)$};
		
		\coordinate (B-050) at (2,0);
		\coordinate (B00) at (2.5, 0);
		\coordinate (B0-025) at (2.5, -0.25);
		\coordinate (B0-05) at (2.5,-0.5);
		\coordinate (B10) at (3.5,0);
		\coordinate (B01) at (2.5, 1);
		\coordinate (B11) at (3.5, 1);
		\coordinate (B0105) at (2.5,1.5);
		\coordinate (B1050) at (4,0);
		\coordinate (B0250) at (2.75, 0);
		
		\coordinate (B1025) at (3.5,0.25);
		\coordinate (B0751) at (3.25,1);
		\coordinate (B0-05) at (2.5,-0.5);
		
		\coordinate (B-05105) at (2.2,1.3);
		\coordinate (B105-05) at (3.8,-0.3);
		\coordinate (B-04103) at (2.1,1.3);
		\coordinate (B10330) at (3.8333,0);
		
		\coordinate (B-1-1) at (2,-0.5);
		\coordinate (B105105) at (4,1.5);
		\coordinate (B1105) at (3.5,1.5);
		\coordinate (B-025-1) at (2.25,-1);
		
		\coordinate (B4025) at (3.5,0.25);
		\coordinate (B125025) at (3.75,0.25);
		
		\draw[solid] (B00) -- (B01) -- (B0751) -- (B1025) -- (B0250) -- (B00);
		\draw[dashed] (B0751) -- (B11) -- (B1025) ;
		\draw[dashed] (B10) -- (B4025);
		\draw[red] (B-1-1) --node[anchor=west]{dccut-type-II} (B105105);
		\draw[blue] (B0-05) -- node[anchor=east]{L\&P} (B1105);

		\draw[solid] (B-050) -- (B00) -- (B0-05); 
		\draw [arrow](B00) -- node[anchor=east]{$x_2$} (B0105);
		\draw [arrow](B00) -- node[anchor=north]{$x_1$} (B1050);
		\draw (3.5,0.25) node[anchor=west]{$(1,\frac{1}{4})$};
		\fill[red] (3.5,0.25) circle (1pt);
		\end{tikzpicture}
	\end{figure}
	
	Firstly, at the point $(0.75,1)$, we can construct one type-II DC cut: $x_1 + x_2 \leq 1$ and one L\&P cut: $3x_1+4x_2 \leq 4$ (since only one non-integer entries in $(0.75,1)$). In this case, we observe that the DC cut is deeper than the L\&P cut since more areas are eliminated. Secondly, at the point $(1,0.25)$, one DC cut is $-x_1+x_2\geq0$ and one L\&P cut is $4x_1 - 2x_2 \leq 1$. In this case, we can not say which cut is deeper. For reducing more quickly and deeply the set $\K$ and $\S$, we suggest to introduce both DC and L\&P cuts at $u^*$ if they are both applicable. Otherwise, in Case 2 where DC cut is not applicable, we will use L\&P cut only. \qed
\end{example}

\section{DCCUT Algorithm} \label{sec:DCCUTAlgo}
In this section, we will use the local/global DC cuts and the classical global cuts discussed in previous section to establish a cutting plane  algorithm for solving \eqref{MBLP}, namely \verb|DCCUT| Algorithm. We also develop a cut-generator toolbox on MATLAB, namely \verb|CUTGEN|, for generating classical global cuts for \eqref{MBLP}, including Lift-and-Project (L\&P) cut, Gomory's Mixed-Integer (GMI) cut  and Mixed-Integer Rounding (MIR) cut. 

\subsection{DCCUT Algorithm}\label{subsec:dccut}
DCCUT Algorithm consists of constructing in each iteration the constructible DC cuts (type-I and type-II) and/or classical global cuts (e.g., L\&P cut) to reduce progressively the sets $\K$ and $\S$, which results a sequence of the reduced sets $\{\K^n\}_{n\geq 0}$ and $\{\S^n\}_{n\geq 0}$. Once we find a feasible solution in $\S^k$, then we update the upper bound (\UB). Once the linear relaxation on $\K^k$ is solved, then we can update the lower bound (\LB). If the linear relaxation on $\K^k$ provides a feasible optimal solution in $\S^k$, or $\K^k=\emptyset$, or the gap between $\UB$ and $\LB$ is small enough, then we terminate DCCUT algorithm and return the best upper bound solution. Note that DCA is used to find good upper bound solutions and generate DC cuts. 
\renewcommand{\algorithmcfname}{DCCUT Algorithm}
\begin{algorithm}[ht]
	\caption{}
	\label{algo:DCCUT}
	\KwIn{Problem \eqref{MBLP}; penalty parameter $t$; absolute gap tolerance $\varepsilon>0$.}
	\KwOut{Optimal solution $u_{opt}$ and its optimal value $f_{opt}$.} 
	
	\textbf{Initialization:} $k \leftarrow 0$; $(P^0)\leftarrow$\eqref{MBLP}; $\K^0 \leftarrow \K$; $\S^0 \leftarrow \S$; $\UB \leftarrow +\infty$; $\LB\leftarrow -\infty$; $u_{opt}\leftarrow null$; $f_{opt}\leftarrow null$;
	
	\While {$\UB -\LB \geq \varepsilon$}
	{
	Solve \ref{R(Pk)} to obtain its optimal solution $u^0$;
	
	\If{\ref{R(Pk)} is infeasible}
	{
	    \Return;
	}
	\If{$f(u^0) > \LB$}
	{
	    Update lower bound: $\LB \leftarrow f(u^0)$;
	}
	
	\SetKwIF{If}{ElseIf}{Else}{if}{then}{else if}{else}{end}
	\eIf{$u^0\in \S^k$}
	{
    	\If{$u^0$ is a better feasible solution than $u_{opt}$}
		    {
		        Update upper bound: $\UB \leftarrow f(u^0)$; 
		        
		        Update best solution: $u_{opt} \leftarrow u^0$; $f_{opt} \leftarrow \UB$;
		    }
		    \Return;
	}
	{
			Use DCA to problem \eqref{Ptk} from initial point $u^0$ to get a critical point $u^*$;

			Initialize cut pool: $\V^k \leftarrow \emptyset$.
			
			Add classical global cuts at $u^0$ (e.g., L\&P cut) to $\V^k$;
			
			\SetKwIF{If}{ElseIf}{Else}{if}{then}{else if}{else}{end}
			\eIf {$u^*\in \S^k$}
			{
				Add a $\dccuttypeI$ at $u^*$ to $\V^k$;
				
				\If{$u^*$ is a better feasible solution than $u_{opt}$}
				{
					Update upper bound: $\UB \leftarrow f(u^*)$; 
					
					Update best solution: $u_{opt} \leftarrow u^*$; $f_{opt} \leftarrow \UB$;
				}
			}
			{
				\SetKwIF{If}{ElseIf}{Else}{if}{then}{else if}{else}{end}
				\uIf {$p(u^*)\notin \Z$  and $x_i^* \neq \frac{1}{2}\;\forall i\in \{1,\ldots,n\}$}
				{
    				Add a $\dccuttypeII$ at $u^*$ to $\V^k$;
				}
				\Else
				{
					Add classical global cuts at $u^*$ (e.g., L\&P cut) to $\V^k$;
				}
			}
			
			$\K^{k+1}\leftarrow \K^k \cap \V^k$; $\S^{k+1}\leftarrow \S^k \cap \V^k$; $k \leftarrow k+1$;		
		}
	}
\end{algorithm}

In the detailed DCCUT algorithm, we use some notations as follows: The sequence of sets $\{\V^n\}_{n\geq 0}$ stands for the collection of cuts (including DC cuts and classical global cuts), where the element $\V^k$ denotes the cut pool constructed at iteration $k$. The sequences of the reduced sets $\{\S^n\}_{n\geq 0}$ and $\{\K^n\}_{n\geq 0}$ are initialized with $\K^0 = \K$ and $\S^0 = \S$, and updated by $\K^{k+1}=\K^k \cap \V^k$ and $\S^{k+1}=\S^k \cap \V^k$. The mixed integer program defined on the reduced set $\S^k$ at iteration $k$ is given by:
\begin{equation}
\label{Pk}
\min\{ f(u)~|~u\in \S^k \} \tag{$P^k$},
\end{equation}
and its linear relaxation denoted by \ref{R(Pk)} is defined as:
\begin{equation}
\label{R(Pk)}
\min\{ f(u)~|~u\in \K^k \} \tag*{$R(P^k)$}
\end{equation}
The DC formulation of problem \eqref{Pk} is denoted by \eqref{Ptk} defined by:
\begin{equation*}\label{Ptk}
\min\{f(u)+tp(u) ~|~u \in \K^k\}. \tag{$P^k_t$}
\end{equation*} 

Note that a large enough penalty parameter $t$ depends on $\K^k$ and $\S^k$. Based on the expressions of $t_0$ in \eqref{eq:t0} and $t_1$ in \eqref{eq:t1}, it is hard to say whether $t$ will be increased or decreased when $\K^k$ and $\S^k$ are reduced. However, there always exist large enough finite $t_0$ and $t_1$ for any $\K^k$ and $\S^k$. Therefore, we will suppose that $t$ is given large enough for all $\K^k$ and $\S^k$. 
The next theorem shows that DC cuts will never be introduced redundantly. 

\begin{theorem} \label{thm:7}
	No DC cut introduced in DCCUT Algorithm is redundant.
\end{theorem}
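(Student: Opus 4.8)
The plan is to prove non-redundancy through the standard separation criterion: a newly added inequality fails to be redundant precisely when the current feasible polyhedron still contains a point that the inequality strictly violates. For a DC cut introduced at iteration $k$, the natural witness is the very point $u^*$ at which the cut is built. Concretely, I would establish that (a) $u^*\in\K^k$, and (b) $u^*$ strictly violates the DC cut; together these force $\K^{k+1}=\K^k\cap\V^k$ to be a proper subset of $\K^k$, so the cut cannot be implied by the constraints already defining $\K^k$ (which include all cuts accumulated in iterations $0,\dots,k-1$).

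First I would pin down that $u^*\in\K^k$. The cut is constructed from the critical point returned by applying DCA to \eqref{Ptk}, whose convex subproblems are linear programs over $\K^k$ solved by the simplex method; every iterate, and in particular the terminal critical point $u^*$, is therefore a vertex of $\K^k$, so $u^*\in V(\K^k)\subseteq\K^k$. Next I would verify strict separation for each cut type. For a \dccuttypeI{} we are in the case $u^*\in\S^k$, hence $x^*\in\{0,1\}^n$ and Lemma \ref{lemma:1}$(iii)$ gives $l_{u^*}(u^*)=p(u^*)=0<1$, so $u^*$ violates $l_{u^*}(u)\geq 1$. For a \dccuttypeII{} the construction requires $p(u^*)\notin\Z$, and Lemma \ref{lemma:1}$(i)$ gives $l_{u^*}(u^*)=p(u^*)<\lceil p(u^*)\rceil$, so $u^*$ violates $l_{u^*}(u)\geq\lceil l_{u^*}(u^*)\rceil$. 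In both cases $u^*$ is a point of $\K^k$ that the new cut removes.

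To conclude, I would phrase redundancy as implication and argue by contradiction. The set $\K^k$ is described by the original constraints of $\K$ together with every cut accumulated in previous iterations, and $u^*\in\K^k$ means $u^*$ satisfies all of them. If the newly introduced DC cut were redundant, i.e.\ implied by this constraint system, then every feasible point of $\K^k$—in particular $u^*$—would satisfy the DC cut, contradicting the strict violation just established. Hence the DC cut is non-redundant, and indeed $\K^{k+1}\subsetneq\K^k$.

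The step deserving the most care is the clean justification of $u^*\in\K^k$ uniformly across the branches of the algorithm (feasibility of the DCA iterates over the reduced set $\K^k$) together with a precise definition of ``redundant'' as implication by the current constraint system, so that the argument covers both repetition of a previously generated cut and implication by earlier cuts. A secondary point worth addressing is the coexistence of the DC cut with the classical global cuts placed in the same pool $\V^k$ at $u^0$: since those cuts are valid for $\S$ and, in the \dccuttypeI{} case, the feasible vertex $u^*\in\S^k$ is never cut off by a valid inequality for $\S$, the DC cut remains a genuine separator of $u^*$. This keeps the conclusion intact even when the notion of redundancy is taken relative to the constraints inherited from earlier iterations, which is the sense in which the statement is to be understood.
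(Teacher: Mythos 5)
Your proof is correct, but it takes a genuinely different --- and in fact stronger --- route than the paper's. The paper reads ``redundant'' narrowly as two DC cuts being the \emph{same} inequality (identical affine part and right-hand side) and argues by contradiction on pairs: if a \dccuttypeI{} at $u^*$ and a later \dccuttypeII{} at $v^*\in V(\K^k)\setminus\S^k$ coincided, then $l_{u^*}(v^*)=l_{v^*}(v^*)<\lceil l_{v^*}(v^*)\rceil=1$, so $v^*$ would already have been cut off by the earlier cut, which is impossible since $v^*$ lies in the current set $\K^k$; the remaining pairings (type-I/type-I, type-II/type-II) are dismissed ``in a similar way.'' You instead prove non-redundancy in the standard sense of non-implication by the whole current constraint system: the construction point $u^*$ is a vertex of $\K^k$ (hence satisfies the original constraints and every cut accumulated in iterations $0,\dots,k-1$) and strictly violates its own cut ($l_{u^*}(u^*)=0<1$ for type-I by Lemma \ref{lemma:1}(iii), and $l_{u^*}(u^*)=p(u^*)<\lceil p(u^*)\rceil$ for type-II because $p(u^*)\notin\Z$). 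This witness argument treats all pairs uniformly, never needs to compare the algebraic forms of two cuts, and excludes not only duplication of a single earlier cut but also implication by any \emph{combination} of earlier constraints (including earlier classical cuts), which the paper's pairwise argument leaves open. Both proofs ultimately rest on the same two facts --- the DCA output lies in $\K^k$ and is strictly separated by the cut built at it --- but your packaging yields a cleaner and stronger conclusion; the one point to state precisely, as you partially do, is that for a \dccuttypeII{} the non-implication is asserted relative to the constraints present \emph{before} iteration $k$, since an L\&P cut generated at $u^0$ and placed in the same pool $\V^k$ could conceivably also separate the fractional point $u^*$.
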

\begin{proof}
	$\rhd$ We will show that a type-I DC cut can not be redundant with a type-II DC cuts. Suppose that we have first introduced a type-I DC cut at $u^*\in \S$ as $l_{u^*}(u)\geq 1$, then we introduced a type-II DC cut at $v^*\in V(\K^k)\setminus \S^k$ as $l_{v^*}(u)\geq \lceil l_{v^*}(v^*) \rceil$ such that the two cutting planes are redundant. Then, the redundancy implies that \begin{equation}
	    \label{eq:thm8eq1}
	    l_{u^*}(u) = l_{v^*}(u), \forall u \in \K \text{ and } \lceil l_{v^*}(v^*) \rceil = 1.
	\end{equation}
	As a cutting plane at $v^*$, we also have 
	\begin{equation}
	    \label{eq:thm8eq2}
	    l_{v^*}(v^*) < \lceil l_{v^*}(v^*) \rceil.\end{equation} 
	    It follows from \eqref{eq:thm8eq1} and \eqref{eq:thm8eq2} that 
	$$l_{u^*}(v^*) = l_{v^*}(v^*) < \lceil l_{v^*}(v^*) \rceil = 1,$$
	which implies that $v^*$ is already been cut off by the first cutting plane $l_{u^*}(u)\geq 1$ at $u^*$, and vice-versa. Therefore, a type-I DC cut and a type-II DC cut can never be redundant. \\
	$\rhd$ It can be proved in a similar way that two type-I (resp. type-II) DC cuts can never be introduced redundantly. \qed
\end{proof}

\begin{remark}
	It is possible to introduce one DC cut at $u^*$ in form of $l_{u^*}(u)\geq \alpha$ and then introduce a new DC cut at $v^*\neq u^*$ in form of $l_{v^*}(u)\geq \beta$ with same affine parts ($l_{u^*}=l_{v^*}$) but with different right-hand sides ($\alpha\neq \beta$). Therefore, when the number of cutting planes is too large, cleaning those cuts with the same affine part maybe helpful to reduce the size of the cut pool.  
\end{remark}

\begin{theorem}[Convergence of DCCUT algorithm]
	DCCUT algorithm is convergent and terminates in one of the three cases:
	\begin{enumerate}
		\item[$\bullet$] decide that \eqref{MBLP} is infeasible;
		\item[$\bullet$] obtain an exact global optimal solution $u_{opt}$ of \eqref{MBLP};
		\item[$\bullet$] return an $\varepsilon$-optimal solution $u_{opt}\in\S$ of \eqref{MBLP} such that $\UB -\LB< \varepsilon$. 
	\end{enumerate}
\end{theorem}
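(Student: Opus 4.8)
The plan is to split the argument into a correctness part and a finite-termination part, organised around a single invariant. I would first establish that, at the beginning of every iteration, the reduced set $\S^k$ contains every feasible solution of $\S$ whose objective value is strictly below the current incumbent value $\UB$. This holds at the start ($\S^0=\S$, $\UB=+\infty$) and is preserved by each cut added to $\V^k$: the \dccuttypeII{} (Theorem \ref{thm:dccut-type-II}) and the L\&P cut (Algorithm \ref{algo:LAP}) are valid for $\S$ and delete no feasible point whatsoever, whereas the \dccuttypeI{} (Theorem \ref{thm:dccut-type-I}) deletes only feasible points of value at least $f(u^*)$; since the incumbent is simultaneously refreshed to $\min\{\UB,f(u^*)\}\le f(u^*)$, no feasible point of value below $\UB$ is ever lost. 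Coupled with the monotonicity of the bounds --- $\UB$ is non-increasing, and $\LB$ is non-decreasing because $\{\K^k\}$ is nested and $\LB$ tracks the relaxation value $f(u^0)$ --- the invariant shows that $\LB$ is a genuine lower bound and $\UB$ a genuine upper bound on the optimal value $f^\star$ of \eqref{MBLP} at all times.

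With the invariant in hand, I would match the three exit points of the while loop to the three claimed outcomes. If $R(P^k)$ is infeasible then $\K^k=\emptyset$, hence $\S^k=\emptyset$; when additionally $\UB=+\infty$ only global cuts have been applied, so $\S^k=\S$ and therefore $\S=\emptyset$, which certifies that \eqref{MBLP} is infeasible, while if $\UB<+\infty$ the invariant forces the recorded incumbent to be globally optimal. If instead the relaxation optimum satisfies $u^0\in\S^k$, then $u^0$ minimises $f$ over $\K^k\supseteq\S^k$, and by the invariant over every still-admissible feasible point, so $f(u^0)=f^\star$ and $u^0$ is an exact global optimum. Finally, if the loop stops because $\UB-\LB<\varepsilon$, then $u_{opt}\in\S$ obeys $0\le f(u_{opt})-f^\star\le \UB-\LB<\varepsilon$ and is an $\varepsilon$-optimal solution. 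This matching is essentially bookkeeping once validity and the bounds are secured.

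The substantive step is to prove that the loop cannot cycle indefinitely. In every non-terminating iteration we have $u^0\notin\S^k$, and in that branch the algorithm unconditionally adds an L\&P cut at the fractional vertex $u^0$, strictly separating $u^0$ from $\co(\S^k)$ (a DC cut may be appended as well). The \dccuttypeI{} can fire only finitely often, because each one removes the whole binary slice $\{(x,y)\in\K : x=x^*\}$ and there are at most $2^n$ such slices. For the global cuts I would invoke the finite convergence of the lift-and-project cutting-plane procedure for $0/1$ programs (sequential convexification, cf. \cite{Balas1993,Cornuejols2008}): finitely many such cuts bring $\K^k$ down to $\co(\S^k)$, at which moment the relaxation optimum $u^0$ is a vertex of $\co(\S^k)$ and hence lies in $\S^k$, triggering the exact-optimum exit (unless the gap or the infeasibility exit is reached first). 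Interleaving DC cuts does not spoil this, since every DC cut is itself valid for $\S$ (type-II) or removes no admissible better point (type-I), so the global cuts added at the successive $u^0$ still constitute a convergent L\&P sequence.

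I expect this last step to be the main obstacle. The slice-counting bound on the \dccuttypeI{} and the bound arithmetic are routine, but one cannot prove finiteness by counting vertices of $\{\K^k\}$, since cutting planes continually create new vertices; the proof must genuinely rest on the known finite convergence of the L\&P family, which the algorithm is careful to supply by adding an L\&P cut at $u^0$ in every non-terminating iteration. The one technical point I would have to pin down is exactly which L\&P selection rule (e.g.\ cutting on a most-fractional coordinate) is enforced, so that the cited finite-convergence result applies verbatim to the single cut generated per iteration rather than only to the full round-based procedure.
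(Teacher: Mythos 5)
Your proposal follows the same skeleton as the paper's own proof: convergence is delegated to the convergence of the classical cutting plane method with L\&P cuts of Balas \cite{Balas1993} (DCCUT being viewed as that method augmented with DCA, DC cuts and other global cuts), and the three claimed outcomes are matched to the exit points of the loop. The paper's exit analysis is exactly your second paragraph, including the subcase distinction at the first return (relaxation infeasible with $k=0$ versus $k>0$, and within the latter $\UB=+\infty$ versus $\UB<+\infty$). What you add is rigor the paper leaves implicit: the invariant that $\S^k$ retains every feasible point of value below the incumbent (the paper silently relies on Theorem \ref{thm:dccut-type-I} for this), the observation that a \dccuttypeI{} forces a simultaneous incumbent update so nothing better is ever lost, and the counting argument that type-I cuts fire at most $2^n$ times.

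Where you diverge is finite termination, and here you should know that the paper does not claim it. Your third paragraph tries to show the loop cannot cycle indefinitely, and you correctly flag the obstruction yourself: Balas's finiteness result is proved for a specific sequential-convexification discipline, and it is not automatic that it survives generating one cut at a most-fractional coordinate per iteration with DC cuts interleaved. The paper concedes precisely this point in the remark immediately following the theorem: it states that DCCUT, like classical cutting plane algorithms for mixed-integer programs, may converge in infinitely many iterations, recalls that finiteness (Gomory, Jeroslow, Balas, J\"{o}rg) requires particular assumptions, and explicitly lists ``developing a finite DCCUT algorithm'' as an open question. So the word ``convergent'' in the theorem is to be read asymptotically; the paper's proof consists only of the validity-plus-bookkeeping content that you have correct, and the ``main obstacle'' you identify is not a gap relative to the paper's argument but an honest statement of why the stronger finite-termination claim is left open there as well.
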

\begin{proof}
	The convergence of DCCUT algorithm is guaranteed by the convergence of the classical cutting plane method with L\&P cuts (see e.g., \cite{Balas1993}), since DCCUT algorithm can be considered as a cutting plane method combining L\&P cuts with DCA, DC cuts and other global cuts. 
	
	The returns of DCCUT are given as follows: The first return located in the line $5$ of DCCUT algorithm corresponds to two different cases. The first case is $k=0$, i.e., the linear relaxation $R(P^0)$ is infeasible, thus problem \eqref{MBLP} is also infeasible; The second case is $k>0$, we will return the upper bound solution. In the later case, it is possible to find no solution, i.e., $\S=\emptyset$; Otherwise, the best upper bound solution $u_{opt}$ must be an exact global optimal solution of problem \eqref{MBLP}. The second return located in the line $15$ of DCCUT algorithm occurs if and only if the optimal solution of the linear relaxation \eqref{R(Pk)}, denoted by $u^0$, is a feasible solution in $\S^k$. In this case $u^0$ is a global optimal solution of problem \eqref{Pk}, then DCCUT is terminated, and the global optimal solution of problem \eqref{MBLP} is the better feasible solution between $u^0$ and the current upper bound solution $u_{opt}$.
	\qed
\end{proof}
\begin{remark}
	It is possible that our DCCUT algorithm, as the classical cutting plane algorithm for mixed-integer programs, converges in infinitely number of iterations. Note that there are several finite cutting plane algorithms for pure integer program (see e.g., \cite{gomory1960algorithm,Balas1993}), however, the finiteness is much harder to achieve in the mixed integer case. Gomory proposed the first finite cutting plane algorithm for mixed-integer programs with integer objective \cite{gomory1960algorithm}; Jeroslow \cite{jeroslow1980cutting} developed a finite cutting plane algorithm for mixed-integer programs in the context of facial disjunctive programming; Balas \cite{Balas1993} proved a finite cutting plane algorithm for mixed-binary programs using L\&P cuts; and J\"{o}rg \cite{jorg2008k} gave a finite cutting plane algorithm for mixed-integer programs with bounded polyhedra. In general, cutting plane algorithms may not be finite without particular assumptions in mixed-integer programs. Developing a finite DCCUT algorithm for \eqref{MBLP} still needs more investigations.
\end{remark}

\subsection{Variants of DCCUT}\label{subsec:vdccut}
In DCCUT algorithm, when the critical point $u^*\in V(\K^k)$ of problem \eqref{Ptk} obtained by DCA is infeasible to $\S^k$, then we will introduce either a type-II DC cut in line 28 or some classical global cuts in line 30 to cut off $u^*$ from $\K^k$. A possible variant of DCCUT, namely DCCUT-V1 algorithm, is to introduce both classical global cuts at $u^*$ if $u^*\notin \S^k$ and a type-II DC cut if the condition in line 27 is verified. This variant consists of more cuts in each iteration which could be helpful to improve lower bounds more quickly. However, more cutting planes will also increase the size of linear constraints in $\K^k$ thus slow down the iterations thereafter. Moreover, it is possible to generate inefficient cuts in some hard scenarios. Therefore, there is a trade off between the number of global cuts introduced in each iteration and the global performance of the cutting plane algorithm. In practice, we suggest to increase the number of cuts introduced in each iteration to analyse its impact to the global performance of cutting plane algorithms. Note that more cuts introdcued in each iteration will of course not change the convergence of DCCUT algorithm, so the convergence of DCCUT-V1 is guaranteed. 

\subsection{Parallel DCCUT}\label{subsec:pdccut}
We can also take advantage of parallel computing to further improve the performance of DCCUT algorithm and its variant. To this end, the classical global cuts at $u^*$ can be introduced in parallel since they are constructed independently. The corresponding parallel versions are referred to as P-DCCUT and P-DCCUT-V1, which are hopefully to improve the lower bounds more quickly and accelerate the computation in each iteration. 

Another possible parallel strategy is to execute the block from the line 17 to 32 in some synchronized parallel processes. Each parallel process could generate independently a subset of cutting planes which will be merged together to get the final cut pool $\V^k$ for the $k$th iteration at the end of the line 32 by synchronizing the returns of all parallel processes. Then, the best upper bound solution $u_{opt}$ and the upper bound $\UB$ have to be updated. Concerning the choice of initial points for DCA, suppose that we have $s~(\geq 1)$ available parallel workers, then one can use the initial point $u^0$ for the worker 1, and choose random initial points in $[0,1]^n\times [0,\bar{y}]$ for workers $2$ to $s$. The parallel process can help to potentially provide more feasible solutions quickly via restarting DCA to update upper bound, and generate more cutting planes simultaneously for lower bound improvement. 

\section{Experimental Results}\label{sec:Experiments}
In this section, we will report some numerical results of our proposed DCCUT algorithms. We implement these methods using MATLAB, and the linear subproblems are solved by GUROBI \cite{Gurobi}. Our codes are shared on Github \url{https://github.com/niuyishuai/DCCUT}. The numerical tests are performed on a cluster at Shanghai Jiao Tong University with $30$ CPUs (Intel Xeon Gold 6148 CPU @ 2.40GHz) and 256GB of RAM.

In order to measure the quality of the computed solutions and the performance of the cutting planes, we use the next two measures: the gap (the smaller the better) and the closed gap (the bigger the better). The gap (cf. \gap) is used to measure the quality of the upper bound solution defined as
$$\gap = \frac{\UB- \LB}{\max(|\UB|,|\LB|)+1},$$
where $\UB$ is the upper bound (i.e., the objective value at the best feasible solution) and $\LB$ is the lower bound (i.e., the optimal value of the last LP relaxation). The term $\max(|\UB|,|\LB|)+1$ aims to well define $\gap$ in the case where $|\UB|=|\LB|=0$. The closed gap (cf. \clgap) is used to measure the quality of the lower bound (i.e., the quality of cuts), that is $$\clgap = \frac{\LB-f^0}{\texttt{fbest}-f^0},$$ where $f^0$ is the optimal value of the initial LP relaxation $R(P^0)$, $\LB$ is the optimal value of the current LP relaxation \ref{R(Pk)}, and \texttt{fbest} is the best known solution so far to the problem. Note that \texttt{fbest} is different to $\UB$. The first term is the best known solution so far which is given by the user, while $\UB$ is the best solution computed using the current algorithm.  

In all tests, the absolute gap tolerance $\varepsilon$ in DCCUT algorithms is fixed to $0.01$ (i.e., algorithms will be terminated if $\UB-\LB\leq 0.01$). The tolerances for DCA are set to be $\varepsilon_1=10^{-6}$ and $\varepsilon_2=10^{-3}$. As we have discussed previously, a large-enough parameter $t$ used in DC program \eqref{Pt} is difficult to be determined exactly, however, based on our tests, the influence of $t$ to the numerical solutions and cutting plane generations are not sensitive at all. Fix $t$ as some large values in the interval $[500,1000]$, e.g., $t=500$, is often suitable for our test samples.   

\subsection{First Example: \texttt{sample\_30\_0\_10}}\label{subsec:firstexample}
Firstly, we test a pure integer program, namely \texttt{sample\_30\_0\_10}, consists of $30$ binary variables, $0$ continuous variable and $10$ linear constraints. The lower bound of this problem is difficult to be improved using cutting planes. The problem data is given by:

\noindent\resizebox{\columnwidth}{!}{
\centering $c^{\top} = \left[\begin{array}{cccccccccccccccccccccccccccccc}
-8 & 6 & 8 & -17 & -1 & -5 & 12 & 3 & -13 & 3 & 8 & 5 & 9 & -8 & 20 & -1 & 0 & 10 & 16 & -17 & 16 & -13 & -3 & -16 & 14 & 5 & 6 & -10 & -14 & 8 \end{array}\right]$,
}

\noindent\resizebox{\columnwidth}{!}{
\centering $A= \left[\begin{array}{cccccccccccccccccccccccccccccc} 5 & 8 & -9 & 5 & 10 & 0 & -5 & 2 & 0 & 3 & 6 & 1 & 0 & -2 & -1 & -1 & -4 & 9 & 3 & -3 & -9 & 7 & 5 & 1 & 2 & 1 & -7 & -1 & 5 & -5\\ 7 & -4 & -5 & -9 & -5 & -1 & 9 & 6 & 5 & -5 & -6 & -6 & 3 & 8 & -4 & -3 & 2 & 7 & -2 & -8 & -8 & -4 & -4 & -6 & -3 & 5 & 10 & -1 & 6 & -2\\ 9 & 10 & -7 & 5 & 2 & -5 & -1 & 5 & -9 & 8 & -3 & 2 & 7 & 5 & 7 & 1 & -2 & 2 & 8 & 7 & 4 & -7 & 10 & 6 & -1 & 0 & -2 & 6 & -9 & 1\\ -7 & -8 & 10 & -1 & -10 & -7 & 3 & -1 & -8 & -8 & -9 & 0 & -6 & 9 & 10 & -7 & 7 & -8 & -9 & 6 & -9 & 8 & -8 & 10 & 7 & 8 & -7 & 5 & 9 & -10\\ 9 & -6 & 10 & 8 & -1 & -10 & 0 & -3 & 3 & 0 & 2 & 0 & -3 & 6 & 6 & 1 & 8 & -7 & 4 & 10 & 6 & -5 & 4 & -3 & 10 & -6 & -4 & 5 & -3 & 3\\ -8 & 7 & 3 & 7 & 3 & 5 & -4 & 8 & 3 & 9 & 3 & 9 & -1 & -1 & 5 & 9 & -9 & 6 & -9 & -8 & 6 & -4 & -3 & -7 & 2 & 9 & 5 & -7 & 4 & -7\\ 6 & 4 & 9 & 4 & 1 & 10 & 3 & -5 & 0 & 5 & 2 & 0 & 0 & -2 & 6 & 6 & 4 & -4 & -5 & 6 & 2 & -8 & 3 & -1 & -8 & 2 & -2 & 0 & 7 & -4\\ -4 & -4 & 3 & -6 & -6 & 1 & 6 & 1 & -5 & 3 & -1 & -6 & -7 & -3 & -1 & 6 & 8 & -4 & -8 & -3 & -2 & 1 & 6 & 2 & 10 & 3 & -3 & 10 & 0 & -7\\ -9 & -3 & 2 & -1 & 7 & -10 & -1 & -10 & -3 & -10 & 8 & 10 & 9 & 0 & 7 & -6 & -5 & 4 & -7 & -3 & 4 & 7 & -4 & 0 & 1 & -3 & 8 & -7 & -6 & 5\\ 2 & 7 & 3 & 1 & -10 & 5 & 0 & -1 & 0 & -3 & -8 & 1 & 2 & -2 & 7 & -9 & -2 & 1 & 3 & 5 & -10 & 8 & 3 & -8 & 7 & 4 & 9 & 6 & 5 & 1 \end{array}\right],
$
}
and $b$ is a vector of $10$. 
%A global optimal solution is 
%$$x^* = \left[\begin{array}{cccccccccccccccccccccccccccccc} 0 & 0 & 0 & 1 & 0 & 1 & 0 & 0 & 1 & 0 & 0 & 0 & 0 & 1 & 0 & 1 & 0 & 0 & 0 & 0 & 0 & 1 & 0 & 1 & 0 & 0 & 0 & 1 & 0 & 0 \end{array}\right]$$
The optimal value is -83. The classical cutting plane algorithm using L\&P cuts performs very bad since $\LB$ is hardly improved. 

Now, we are going to use this example to compare the improvement of the lower bound measured by $\clgap$ within 30 seconds using $6$ cutting plane algorithms: LAPCUT (the classical cutting plane algorithm with L\&P cuts only), DCCUT and DCCUT-V1 (with DC cut and L\&P cut only), as well as the parallel versions P-LAPCUT, P-DCCUT and P-DCCUT-V1 (using up to $30$ CPUs). Numerical results of $\gap$, $\clgap$ and $\UB$ are summarized in Table \ref{tab:gap_clgap_30s} where the column $\nLAP$ is the maximum number of L\&P cuts generated at each fractional point. The algorithm provided best $\clgap$ for fixed $\nLAP$ is highlighted in boldface, and the best records for DCCUT type algorithms and LAPCUT type algorithms are underlined. The influence of generating multiple L\&P cuts (\nLAP) to the improvement of the lower bound (\clgap) in different cutting plane algorithms are illustrated in Figure \ref{Fig:clgap}. 

\begin{table}[ht]
	\caption{Numerical results of different cutting plane algorithms (using up to 30 CPUs) for \texttt{sample\_30\_0\_10} within 30 seconds}
	\label{tab:gap_clgap_30s}
	\centering
		\resizebox{\columnwidth}{!}{
			\begin{tabular}{c||c|c||ccc|ccc|ccc|ccc} \hline
				\multirow{2}{*}{$\nLAP$} & LAPCUT & P-LAPCUT & \multicolumn{3}{c|}{DCCUT} & \multicolumn{3}{c|}{P-DCCUT} & \multicolumn{3}{c|}{DCCUT-V1} & \multicolumn{3}{c}{P-DCCUT-V1}\\
				\cline{2-15}
				& \clgap(\%) & \clgap(\%) &\gap(\%) & \clgap(\%) &$\UB$& \gap(\%) & \clgap(\%) &$\UB$& \gap(\%) & \clgap(\%) &$\UB$& \gap(\%) & \clgap(\%) &$\UB$\\
				\hline\hline
1 & 44.91 & 43.45 & 9.50 & 48.04 & -83.00 &9.77 & 46.39 & -83.00 &9.34 & \textbf{49.01} & -83.00 &10.24 & 43.48 & -83.00 \\
3 & 50.38 & 50.41 & 8.95 & 51.32 & -83.00 &8.95 & 51.31 & -83.00 &8.20 & \textbf{55.78} & -83.00 &8.27 & 55.34 & -83.00 \\
5 & 52.39 & 53.47 & 8.69 & 52.86 & -83.00 &8.15 & \textbf{56.07} & -83.00 &8.85 & 51.94 & -83.00 &8.64 & 53.18 & -83.00 \\
7 & 54.25 & 55.93 & 8.20 & 55.78 & -83.00 &7.53 & \textbf{59.69} & -83.00 &10.63 & 54.30 & -81.00 &7.74 & 58.46 & -83.00 \\
9 & 61.70 & 64.88 & 14.42 & 57.89 & -77.00 &6.99 & 62.80 & -83.00 &7.98 & 57.04 & -83.00 &6.62 & \textbf{64.90} & -83.00 \\
11 & 59.45 & 65.26 & 16.69 & 57.39 & -75.00 &6.52 & \textbf{65.45} & -83.00 &8.18 & 55.86 & -83.00 &6.57 & 65.15 & -83.00 \\
13 & 60.49 & \textbf{66.93} & 10.10 & 57.48 & -81.00 &6.56 & 65.22 & -83.00 &7.97 & 57.11 & -83.00 &6.86 & 63.51 & -83.00 \\
15 & 61.27 & \textbf{68.87} & 7.87 & 57.70 & -83.00 &6.30 & 66.73 & -83.00 &13.31 & 57.97 & -78.00 &6.39 & 66.22 & -83.00 \\
17 & 60.39 & \textbf{68.08} & 7.79 & 58.18 & -83.00 &6.08 & 67.93 & -83.00 &9.88 & 58.78 & -81.00 &6.32 & 66.62 & -83.00 \\
19 & 59.75 & \textbf{68.61} & 7.59 & 59.33 & -83.00 &6.10 & 67.82 & -83.00 &9.95 & 58.40 & -81.00 &6.31 & 66.63 & -83.00 \\
21 & 59.88 & 67.98 & 7.79 & 58.18 & -83.00 &6.04 & \textbf{68.19} & -83.00 &9.95 & 58.40 & -81.00 &8.58 & 66.44 & -81.00 \\
23 & 59.75 & 67.90 & 7.67 & 58.86 & -83.00 &5.99 & \textbf{68.44} & -83.00 &9.80 & 59.25 & -81.00 &8.61 & 66.25 & -81.00 \\
25 & 59.75 & \textbf{68.87} & 7.67 & 58.86 & -83.00 &5.93 & 68.80 & -83.00 &9.95 & 58.40 & -81.00 &6.39 & 66.17 & -83.00 \\
27 & 59.75 & \textbf{68.45} & 7.67 & 58.86 & -83.00 &6.07 & 67.97 & -83.00 &9.95 & 58.40 & -81.00 &8.58 & 66.42 & -81.00 \\
29 & 59.71 & \underline{68.88} & 7.85 & 57.80 & -83.00 &5.90 & \underline{\textbf{68.92}} & -83.00 &9.78 & 59.37 & -81.00 &8.45 & 67.17 & -81.00 \\
				\hline
		\end{tabular}}	
\end{table}

\begin{figure}[ht!] 
	\centering
\includegraphics[width=0.8\linewidth]{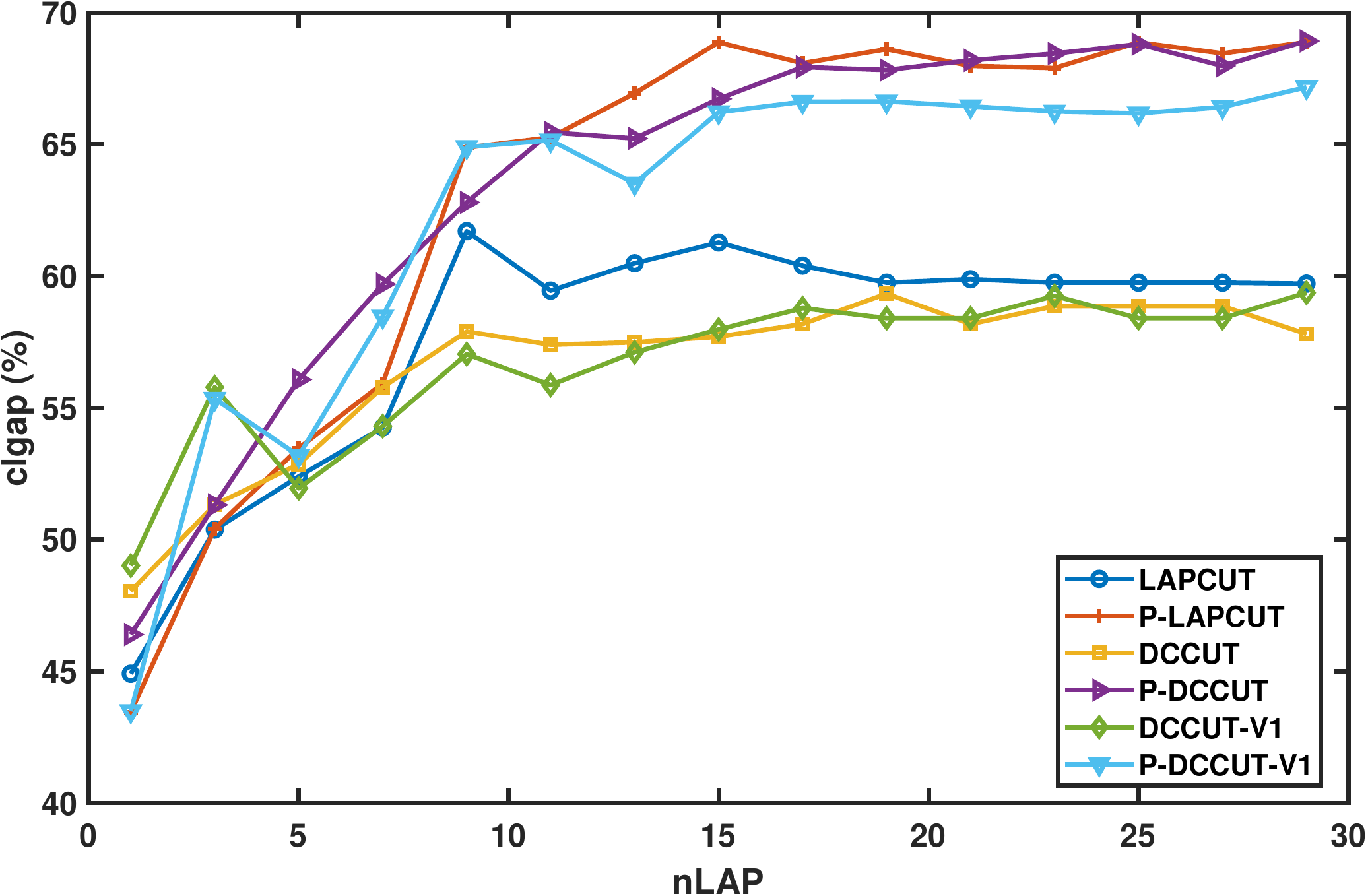}
	\caption{$\nLAP$ v.s. $\clgap$ for $\sampleone$ using 6 cutting plane algorithms in $30$ seconds}
	\label{Fig:clgap}
\end{figure}

\paragraph{Comments on numerical results in Figure \ref{Fig:clgap} and Table \ref{tab:gap_clgap_30s}}
\begin{itemize}
    \item The $\clgap$ is in general improved with the increase of $\nLAP$, and the parallel algorithms always perform better than the algorithms without parallelism. The algorithms with best (i.e., maximal) $\clgap$ seem to be P-LAPCUT and P-DCCUT, then followed by P-DCCUT-V1, LAPCUT, DCCUT-V1 and DCCUT. The closed gap of all compared methods is obviously improved at the beginning of the increase of $\nLAP$ (which is not a surprise since more than one cut added in each iteration could improve the lower bound more quickly), then the best $\clgap$ is reached at some $\nLAP$ and barely improved for larger $\nLAP$. The best choice for $\nLAP$ depends on the specific problems, the optimization methods and the computing resources. In general, if parallel computing resources are abundant enough, then the best $\nLAP$ could be found at the total number of integer variables (this test case is exactly an example). However, for large-scale integer optimization problems, setting $\nLAP$ to be the number of integer variables is generally impossible due to large number of variables, the limitation of resources and the cost of communications in parallel framework. In practice, for a given specific type of \eqref{MBLP}, it is always suggested to try more CPUs until the $\clgap$ is barely improved or all available CPUs are fully utilized. Note that, constructing LAP cuts too quickly is not always good for lower bound improvement, as more LAP cuts require solving more linear programs and increase the size of the linear constraints thereafter, thus could slow down the improvement in \clgap.   
    \item P-DCCUT can always find the global optimal solution -83 within 30 seconds, while the other DCCUT type algorithms can often find global optimal solution as well. This interesting result is due to the use of DCA which allows to quickly find a global optimal solution even the $\gap$ and $\clgap$ are far from optimal. However, this will no longer be the case for LAPCUT type algorithms, since a global optimal solution can only be found when the $\gap$ is reduced to $0$ or the $\clgap$ is increased to 1. This observation demonstrates that DCCUT type algorithms have the advantage of quickly finding and updating good upper bound solutions. 
\end{itemize}  

\subsection{Second Example: \sampletwo}
The second example is a pure binary linear program with $10$ binary variables and $10$ linear constraints. The data is given by:
{\scriptsize
$$c^{\top} = \left[\begin{array}{cccccccccc} -2.2696 & -0.3942 & 3.6018 & -0.9882 & -3.6844 & 2.9946 & -0.6465 & -0.4548 & 3.3601 & -2.1507 \end{array}\right],$$
$$A = \left[\begin{array}{cccccccccc} -0.9760 & -0.2597 & 0.3479 & -0.2245 & -1.3819 & -0.1699 & 1.8862 & 0.7490 & -1.0811 & -1.6861\\ -0.4179 & -0.5955 & -0.3371 & -0.3062 & 0.8015 & -1.4063 & 1.5717 & -1.2475 & -0.1122 & 1.4859\\ -0.7621 & 1.3052 & 0.5319 & 0.9497 & 1.6959 & 1.1078 & 0.6512 & 1.4769 & -0.1982 & -0.8773\\ -0.6498 & 0.0076 & -0.3066 & 1.8852 & 1.1138 & -0.7372 & 0.6152 & -0.5799 & -0.0836 & -0.0105\\ 0.0810 & -0.6236 & -0.8002 & 0.8480 & 1.6002 & -1.5630 & -0.5826 & 0.9058 & -1.2923 & -1.5472\\ -1.0805 & 1.5700 & 1.0696 & -0.0523 & -0.5775 & 1.9393 & 0.7844 & 1.3766 & -0.4787 & 0.6557\\ 1.9829 & -0.0374 & 1.3243 & 1.4588 & -1.8527 & -0.7158 & 0.0847 & 1.4510 & -0.5313 & 0.2882\\ -1.7562 & 1.8777 & 1.2762 & -1.1542 & 1.4269 & 1.9436 & -0.9917 & 1.8222 & -0.2950 & -1.4279\\ -0.5849 & 0.8489 & -0.9042 & 0.8495 & -0.6398 & -1.2533 & -0.5929 & 1.5026 & -1.0576 & -1.6837\\ -0.8693 & 1.5267 & 1.2275 & -0.6247 & 0.9537 & -0.8005 & -1.2797 & -0.3857 & 0.5152 & 0.0165 \end{array}\right],
$$
$$b^{\top} =  \left[\begin{array}{cccccccccc} 0.5288 & 0.7537 & 0.7413 & 0.1773 & 0.3005 & 0.2894 & 0.7848 & 0.2411 & 0.5476 & 0.2180 \end{array}\right],$$
}
and the optimal value is 0. 

We test $6$ cutting plane algorithms presented in subsection \ref{subsec:firstexample} for solving this problem. The absolute gap tolerance $\varepsilon$ is fixed to $0.01$ (i.e., algorithms will be terminated if $\UB-\LB\leq 0.01$). The updates of the $\UB$ (solid cycle line) and $\LB$ (dotted square line) with respect to the number of iterations are plotted in Figure \ref{fig:sampletwo}. The numerical results are summarized in Table \ref{tab:res_S2} where the best/worst computing time is highlighted in boldface/underline.
\begin{table}[h]
	\caption{Numerical results of different cutting plane algorithms (using up to 10 CPUs) for \texttt{sample\_10\_0\_10}}
	\label{tab:res_S2}
	\centering
		\resizebox{\columnwidth}{!}{
			\begin{tabular}{c|c|c|c|c|c|c|c|c|c|c} \hline
				Algorithms &\nLAP & \texttt{iter} &$\LB$&$\UB$& \gap(\%) & \clgap(\%) & cut\_dc1 & cut\_dc2 & cut\_lap & time(s)\\
				\hline\hline
DCCUT & 1 & 27 & 1.478e-02 & 0 & -1.46 & 100.22 & 1 & 13 & 40 & 1.156\\
P-DCCUT & 10 & 9 & 4.795e-01 & 0 & -32.41 & 107.17 & 1 & 4 & 86 & \textbf{0.833}\\
DCCUT-V1 & 1 & 29 & 2.060e-01 & 0 & -17.08 & 103.08 & 1 & 13 & 57 & 1.266 \\
P-DCCUT-V1 & 10 & 9 & 3.220e-01 & 0 & -24.36 & 104.82 & 1 & 4 & 99 & 1.127\\
LAPCUT & 1 & 65 & 0 & 0 & 0.00 & 100.00 & 0 & 0 & 65 & \underline{2.923}\\
P-LAPCUT & 10 & 16 & 0 & 0 & 0.00 & 100.00 & 0 & 0 & 83 & 1.356\\
				\hline
		\end{tabular}}	
\end{table}

\begin{figure}[ht!] 
	%\vspace{-0.8cm}
	\subfigure[DCCUT]{
		\label{subfig:sampletwo_dccut} 
		\includegraphics[width=0.48\linewidth]{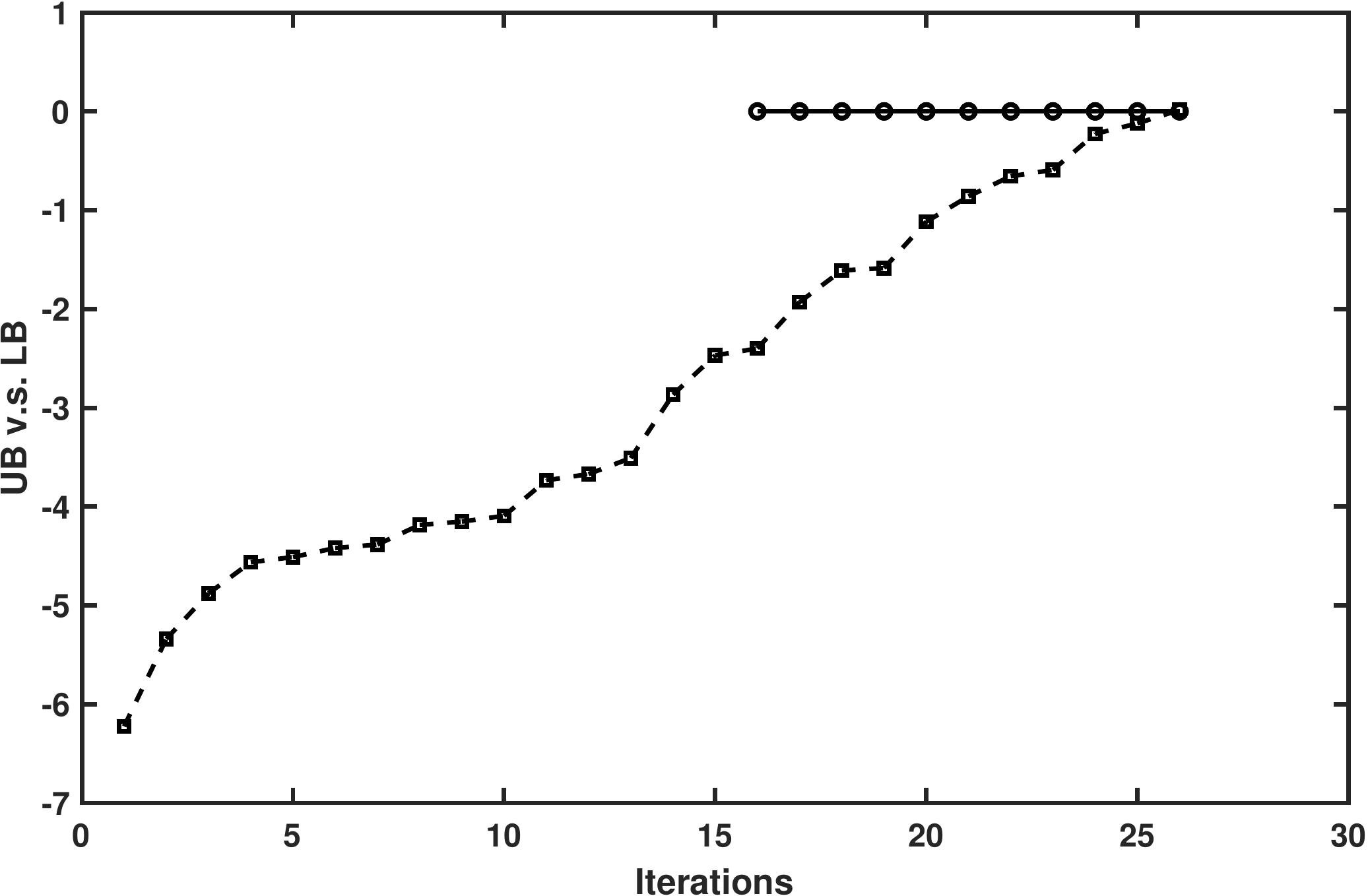}
	}
	\subfigure[P-DCCUT]{
		\label{subfig:sampletwo_pdccut} 
		\includegraphics[width=0.48\linewidth]{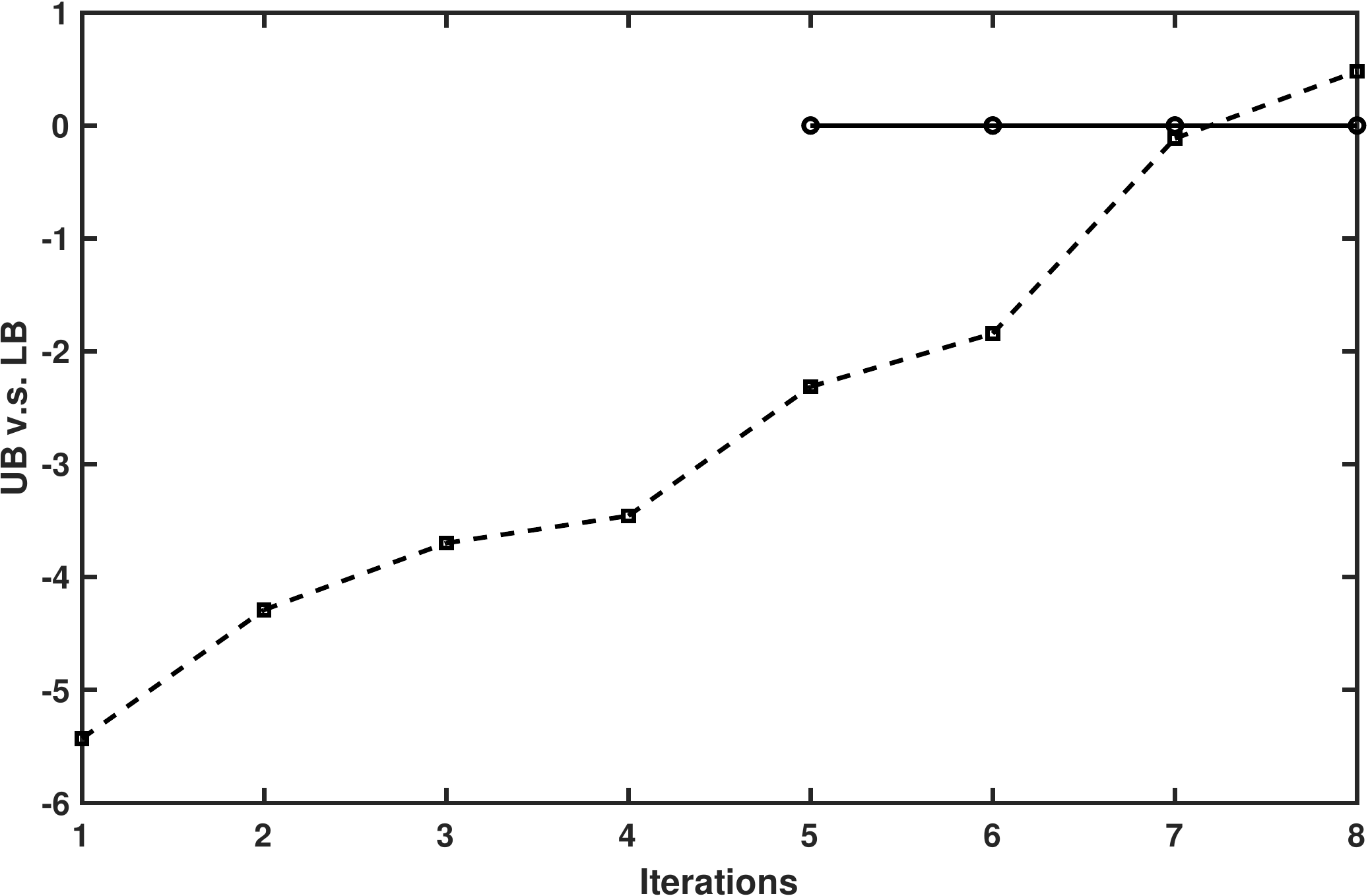}
	}
	\subfigure[DCCUT-V1]{
		\label{subfig:sampletwo_dccutv1} 
		\includegraphics[width=0.48\linewidth]{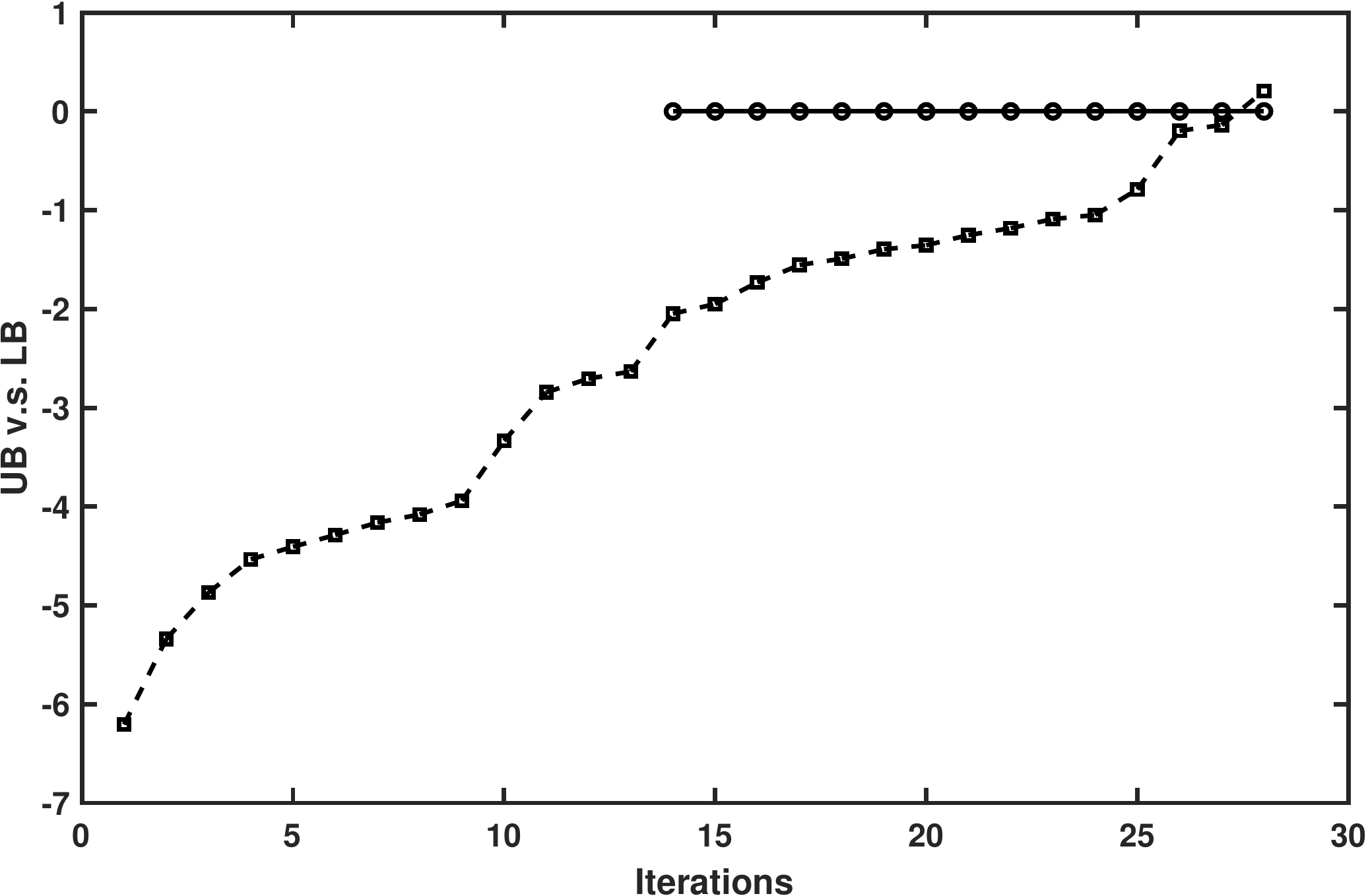}
	}
	\subfigure[P-DCCUT-V1]{
		\label{subfig:sampletwo_pdccutv1} 
		\includegraphics[width=0.48\linewidth]{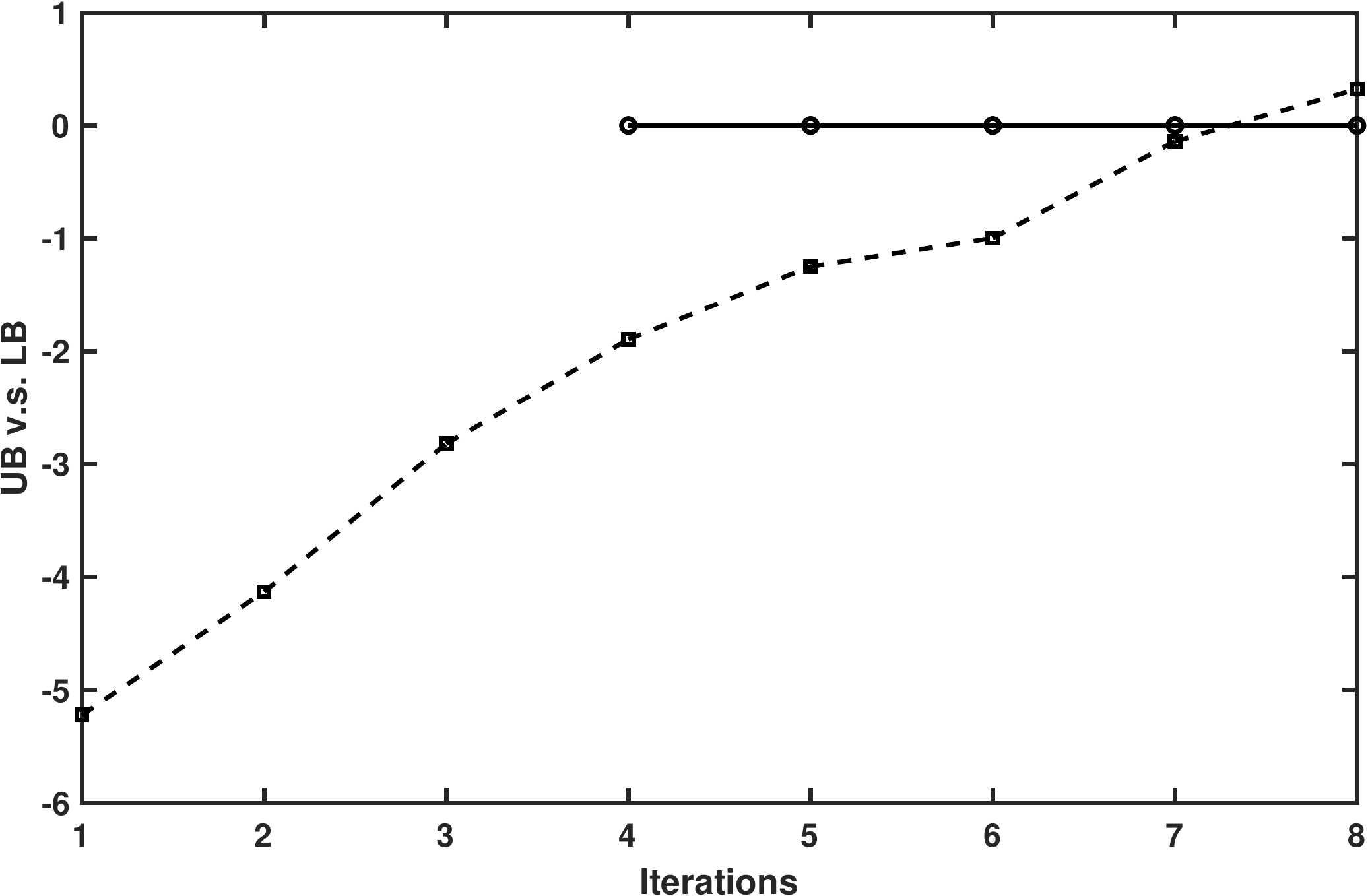}
	}
	\subfigure[LAPCUT]{
		\label{subfig:sampletwo_lapcut} 
		\includegraphics[width=0.48\linewidth]{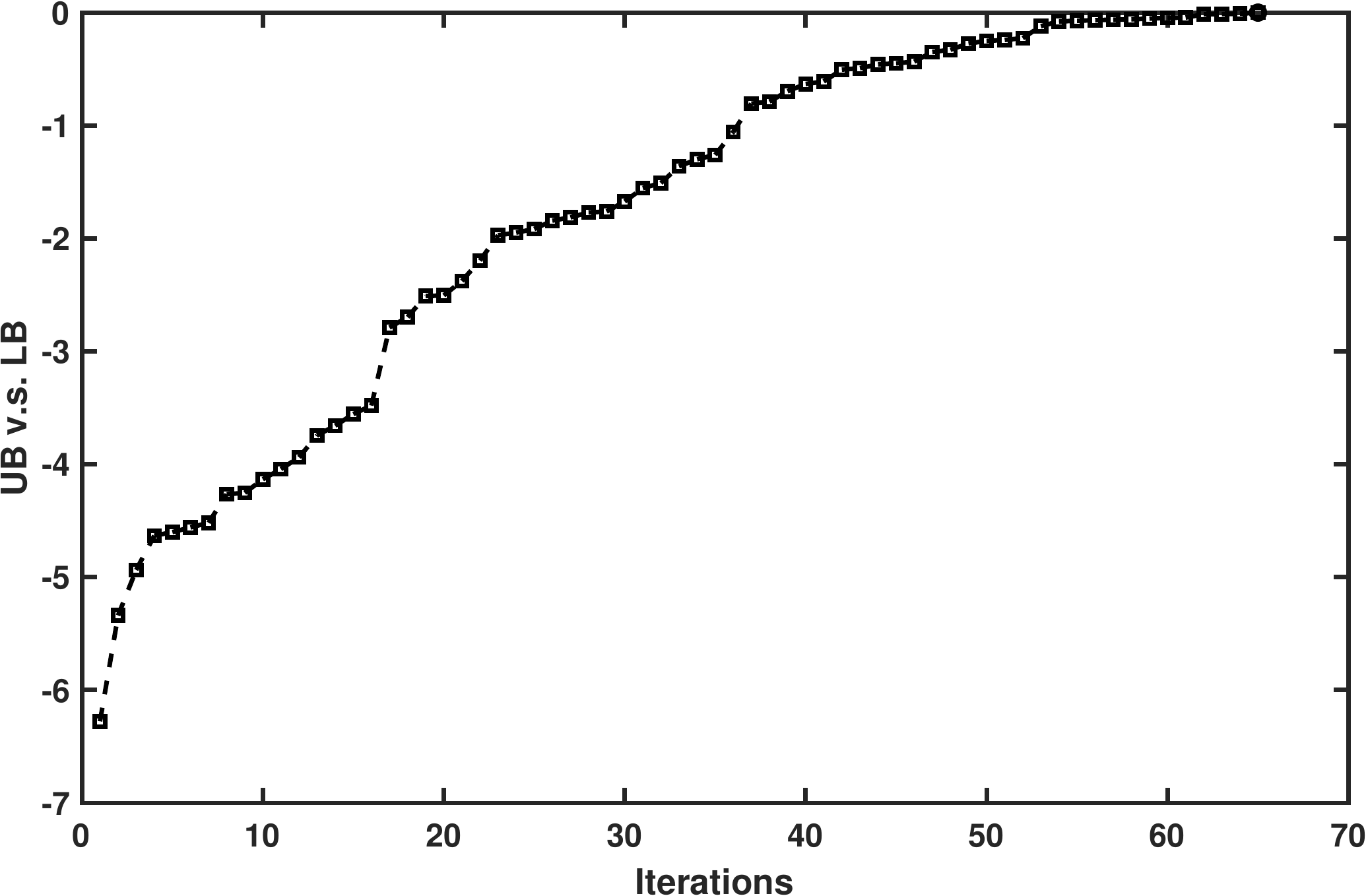}
	}
	\subfigure[P-LAPCUT]{
		\label{subfig:sampletwo_plapcut} 
		\includegraphics[width=0.48\linewidth]{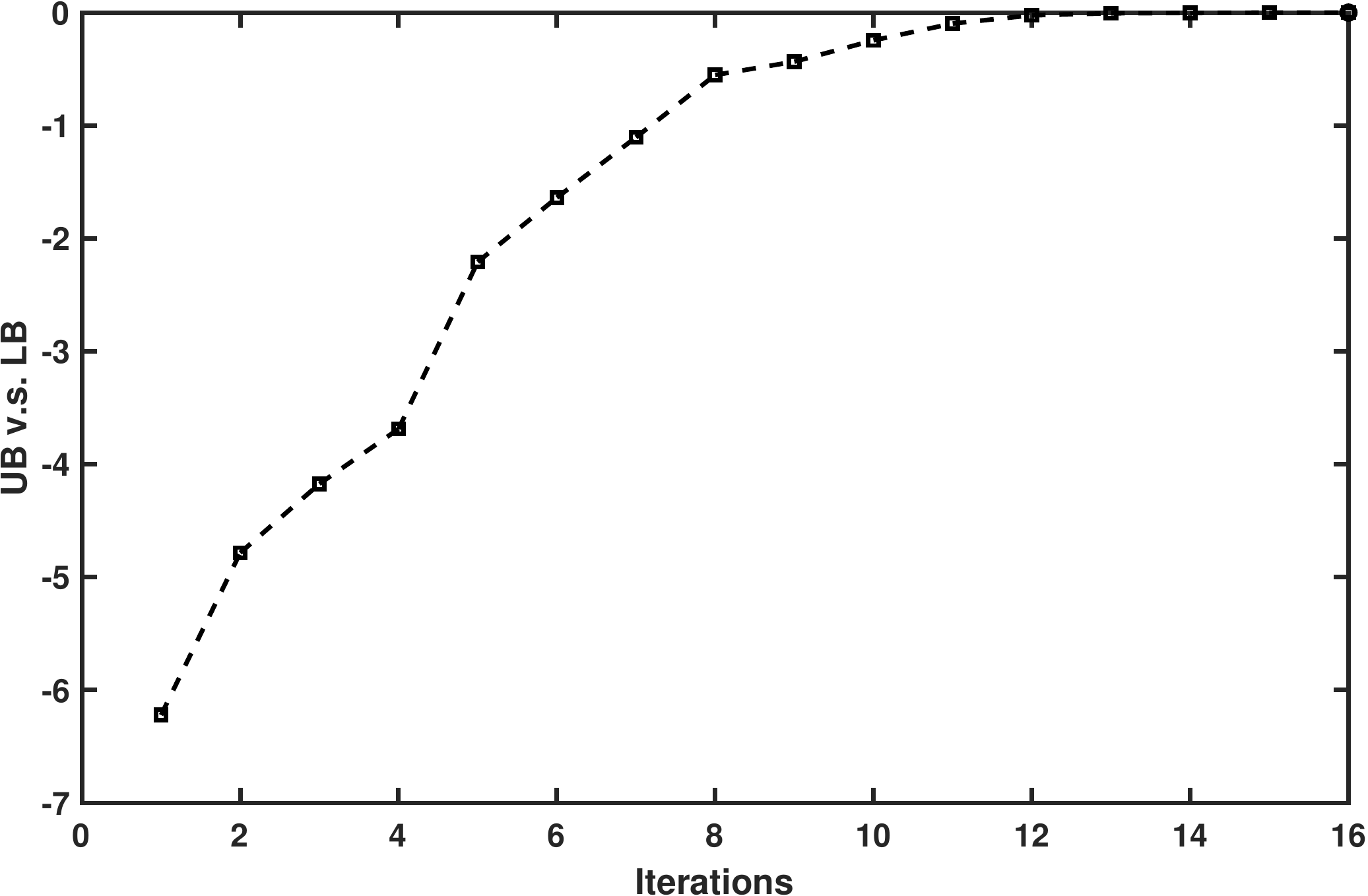}
	}
	\caption{Updates of $\UB$ and $\LB$ in different cutting plane algorithms for $\sampletwo$}
	\label{fig:sampletwo}
	%\vspace{-0.6cm}
\end{figure}

\paragraph{Comments on numerical results in Figure \ref{fig:sampletwo} and Table \ref{tab:res_S2}} 
\begin{itemize}
    \item It is not surprise to see that the parallel algorithms always perform better than non-parallel ones. The fastest method is again P-DCCUT where $10$ CPUs are used for parallel workers. The slowest method is LAPCUT. 
    \item The $\gap$ for DCCUT type algorithms could be negative, similar to the $\clgap$ which could be greater than $100\%$. This is due to the fact that the global optimal solution is found by DCA when $\gap$ and $\clgap$ are not optimal yet, then type-I DC cut (local cut) is introduced to cut off the global optimal solution, thus the lower bound on the reduced set could be greater than the current upper bound, which leads to negative $\gap$ and more than $100\%$ $\clgap$. In both cases, the global optimality of the computed solution is guaranteed. Meanwhile, LAPCUT type algorithms can only have $\clgap$ between $0\%$ (initial) and $100\%$ (optimal), and the $\gap$ must be either $+\infty$ (non-optimal) or $0$ (optimal). 
    \item Among non-parallel algorithms (DCCUT, DCCUT-V1 and LAPCUT), LAPCUT creates most number of LAP cuts, while DCCUT and DCCUT-V1 require less LAP cuts. This observation demonstrates the benefit of DC cuts which indeed accelerates the convergence of cutting plane algorithm by improving the lower bound more quickly than using LAP cuts alone. 
    \item DCA is restarted in each iteration which not only helps to update upper bounds, but also leads to more DC cuts (type-I and type-II, cf. \texttt{cut\_dc1} and \texttt{cut\_dc2}) for improving the lower bounds. It is worth noting that, once a global optimal solution is found by DCA, then restarting DCA in subsequent iterations will not improve $\UB$ anymore, but it is still helpful to construct DC cuts for $\LB$ improvement. However, as no one know the global optimality of the current $\UB$, therefore, we cannot stop restarting DCA for $\UB$ updation until the convergence of the algorithm. In the case where $\UB$ is barely improved in many consecutive iterations, it is worth to think about reducing the probability of restarting DCA, which maybe useful to improve the overall performance of DCCUT type algorithms for solving hard cases. 
\end{itemize}  

\subsection{Performance on MIPLIB 2017 dataset}
Next, we will show some test results on the well-known benchmark testbed MIPLIB 2017 obtained from \url{http://miplib.zib.de}. We choose 14 problems in form of \eqref{MBLP} model whose information is given in Table \ref{tab:miplib_data}. Note that all equality constraints $A_{eq}x + B_{eq}y=b_{eq}$ are converted into inequalities as $A_{eq}x + B_{eq}y\leq b_{eq}$ and $-A_{eq}x - B_{eq}y\geq -b_{eq}$. We are interested in the numerical results of $\clgap$ and $\gap$ for 6 cutting plane algorithms (namely, LAPCUT, P-LAPCUT, DCCUT, P-DCCUT, DCCUT-V1 and P-DCCUT-V1) tested within $120$ seconds and with fixed $\nLAP=30$. The detailed numerical results are summarized in Table \ref{tab:miplib_gap_clgap_120s}, where the algorithm provided the best $\clgap$ is highlighted in boldface.
\begin{table}[ht]
	\caption{Tested problem information in MIPLIB 2017}
	\label{tab:miplib_data}
	\centering
			\begin{tabular}{c|c|c|c|c|c} \hline
				Problem & Binary & Continuous & Constraints & $f^0$ & \texttt{fbest}\\
			    \hline\hline
				neos5 & 53 & 10 & 63 & 13.00 & 15 \\
				mas74& 150 &1 &13 &10482.79528 & 11801.18572\\
 			    mad & 200 & 20 & 51 & 0 & 0.0268\\
				pk1 & 55 & 31 & 45 & 0& 11\\
				assign1-5-8& 130 &26 &161 &183.36 & 212\\
				ran14x18-disj-8 & 252 & 252 & 447 &3444.42 & 3712\\
				tr12-30& 360 & 720 &750 & 14210.43 &130596\\
				supportcase26& 396 & 40& 870 &1288.102161 &1745.123813\\ 
				exp-1-500-5-5& 250 &740 & 550 & 28427.05 &65887\\
				neos-3754480-nidda &50 &203 & 402 & -1216923.27&12941.74\\
				sp150x300d& 300 & 300 & 450 &4.89 &69\\
				mas76& 150 & 1 & 12 &38893.90364 &40005.05399\\
				neos-911970& 840 &  48 & 107 &23.26 &54.76\\
				gmu-35-40& 1200 & 5 & 424 &-2406943.556 &-2406733.369\\
				\hline
		\end{tabular}
\end{table}

\begin{table}[ht]
	\caption{Numerical results of different cutting plane algorithms (using up to 30 CPUs and $\nLAP=30$) for MIPLIB benchmark within 120 seconds}
	\label{tab:miplib_gap_clgap_120s}
	\centering
		\resizebox{\columnwidth}{!}{
			\begin{tabular}{c||c|c||ccc|ccc|ccc|ccc} \hline
				\multirow{2}{*}{Problem}  & LAPCUT & P-LAPCUT & \multicolumn{3}{c|}{DCCUT} & \multicolumn{3}{c|}{P-DCCUT} & \multicolumn{3}{c|}{DCCUT-V1} & \multicolumn{3}{c}{P-DCCUT-V1}\\
				\cline{2-15}
				&\clgap(\%) & \clgap(\%) &\gap(\%) & \clgap(\%) &$\UB$& \gap(\%) & \clgap(\%) &$\UB$& \gap(\%) & \clgap(\%) &$\UB$& \gap(\%) & \clgap(\%) &$\UB$\\
				\hline\hline
			    neos5 & 37.04 & \textbf{37.16} & 13.66 & 33.93 & 16.00 &7.88 & 36.94 & 15.00 &11.26 & 32.09 & 15.50 &7.91 & 36.73 & 15.00 \\
                \hline
                mas74 & 10.30 & \textbf{11.56} & Inf & 9.70 & Inf &Inf & 11.28 & Inf &Inf & 9.31 & Inf &Inf & 10.89 & Inf \\
                \hline
                mad & 0.00 & 0.00 & 44.35 & 0.00 & 0.80 &43.08 & 0.00 & 0.76 &44.35 & 0.00 & 0.80 &47.81 & 0.00 & 0.92 \\
                \hline
                pk1 & 0.00 & 0.00 & 96.30 & 0.00 & 26.00 &95.00 & 0.00 & 19.00 &96.30 & 0.00 & 26.00 &93.75 & 0.00 & 15.00 \\
                \hline
                assign1-5-8 & 18.79 & \textbf{21.12} & Inf & 16.81 & Inf &12.82 & 19.85 & 217.00 &Inf & 16.79 & Inf &Inf & 19.96 & Inf \\
                \hline
                ran14x18-disj-8 & 8.70 & 11.12 & Inf & 8.82 & Inf &Inf & 11.33 & Inf &Inf & 8.51 & Inf &Inf & \textbf{12.49} & Inf \\
                \hline
                tr12-30 & 46.17 & 56.30 & Inf & 53.45 & Inf &Inf & 65.84 & Inf &Inf & 72.47 & Inf &1.50 & \textbf{99.70} & 132228.00 \\
                \hline
                supportcase26 & 24.26 & 27.44 & Inf & 26.91 & Inf &Inf & \textbf{30.51} & Inf &Inf & 22.79 & Inf &Inf & 28.49 & Inf \\
                \hline
                exp-1-500-5-5 & 79.02 & 90.69 & Inf & 71.58 & Inf &Inf & 91.79 & Inf &Inf & 66.81 & Inf &0.28 & \textbf{99.51} & 65887.00 \\
                \hline
                neos-3754480-nidda & 49.42 & \textbf{57.80} & Inf & 44.97 & Inf &Inf & 56.77 & Inf &Inf & 40.64 & Inf &Inf & 52.46 & Inf \\
                \hline
                sp150x300d & 75.05 & \textbf{100.00} & Inf & 62.59 & Inf &Inf & 98.44 & Inf &Inf & 65.18 & Inf &0.00 & \textbf{100.00} & 69.00 \\
                \hline
                mas76 & 9.28 & \textbf{11.71} & Inf & 9.42 & Inf &Inf & 11.11 & Inf &Inf & 8.25 & Inf &Inf & 10.73 & Inf \\
                \hline
                neos-911970 & 82.74 & 91.68 & 62.64 & 83.40 & 134.24 &42.65 & 91.32 & 91.46 &62.64 & 83.40 & 134.24 &29.68 & \textbf{91.69} & 74.57 \\
                \hline
                gmu-35-40 & 0.00 & 0.02 & Inf & 0.03 & Inf &Inf & 0.05 & Inf &Inf & 0.00 & Inf &Inf & \textbf{0.06} & Inf \\
                \hline\hline
                \textbf{Avg clgap} & 31.48& 36.90& -- &30.12  & -- & -- & 37.52  & --  & &30.45 & -- & -- & \textbf{40.19}  & -- \\
                \hline
		\end{tabular}
		}	
\end{table}

\paragraph{Comments on numerical results in Tables \ref{tab:miplib_gap_clgap_120s}} 
\begin{itemize}
    \item It is not surprising to see that the parallel algorithms outperform their non-parallel versions by improving the updation of lower bounds (i.e., with larger $\clgap$). The best average $\clgap$ amounts to be P-DCCUT-V1, followed by P-DCCUT, P-LAPCUT, LAPCUT, DCCUT-V1 and DCCUT. 
    \item It is interesting to see that the parallel DCCUT-V1 (P-DCCUT-V1) achieves the best average $\clgap$, while the non-parallel version DCCUT-V1 gives the worst $\clgap$ in most of cases. This is probably due to the fact that P-DCCUT-V1 creates most cutting planes in each iteration, if these cuts can be processed in parallel, then the lower bound will be improved more quickly, which leads to the best $\clgap$ in P-DCCUT-V1; otherwise, it will take more time to create more cuts in each iteration for the non-parallel version, which leads to the worst performance in updating $\clgap$ within a limited time range. 
    \item P-DCCUT-V1 obtains more upper bound solutions than the other algorithms. We think this should be caused by the best improvement of the lower bounds, which reduces quickly the search region and thus increases the probability for DCA to find feasible local solutions.
    \item Moreover, in some cases, e.g. \texttt{tr12-30}, P-DCCUT-V1 and P-DCCUT outperform P-LAPCUT which demonstrates again the benefit of DC cut.
\end{itemize}

Based on the above tests, we also believe that the introduction of DC cut as a type of user cut in GUROBI and CPLEX solvers should be helpful in improving the lower bounds and the overall performance of these solvers. DC cuts and the corresponding algorithms should be promising techniques for mixed binary programs, and deserve more attention in the community.

\section{Conclusions and Perspectives}\label{sec:conclusion_perspective}

In this paper, we investigate the construction of two types of DC cuts (namely, $\dccuttypeI$ and $\dccuttypeII$). The type-I DC cut is a local cut for feasible point, while the type-II DC cut is a global cut for fractional point. We discuss about the cases where DC cuts are constructable. Otherwise, we propose introducing classical global cuts, such as Lift-and-Project cut, Gomory's mixed-integer cut and Mixed-integer rounding cut for instead. We give examples to illustrate the relationship between DC cuts with L\&P cuts. Combining DC cuts and classical global cuts, we establish a cutting plane algorithm (cf. DCCUT algorithm) for solving problem \eqref{MBLP}, whose convergence theorem is proved. A variant DCCUT algorithm (DCCUT-V1) by introducing more classical global cuts in each iteration, and parallel DCCUT algorithms are also proposed. Numerical results demonstrate that DCCUT type algorithms are able to find global optimal solution quickly without optimal $\gap$ or $\clgap$, and DC cut can indeed improve the lower bound. By introducing parallelism, the performance of DCCUT algorithms are significantly improved, and P-DCCUT-V1 algorithm often outperforms the others with best $\clgap$ tested on some MBLP problems in the MIPLIB2017 dataset.

Some questions deserve more attention: (i) developing finite mixed integer programming DCCUT algorithm; (ii) How to guarantee, without knowing the set of vertices $V(\K)$, that a given parameter $t$ is large enough for exact penalty and for type-II DC cut? (iii) proposing appropriate heuristic to restart DCA with a suitable probability for improving the overall performance of DCCUT type algorithms, especially to solve hard problems; (iv) introducing other classical cuts such as Gomory's mixed-integer cuts, mixed-integer rounding cut, knapsack cut, cover cuts and clique cuts in DCCUT algorithm to improve again the lower bound; (v) extending DC cut from binary linear case to general integer nonlinear case.

\begin{acknowledgements}
%If you'd like to thank anyone, place your comments here and remove the percent signs.
This work is supported by the Natural Science Foundation of China (Grant No: 11601327) and by the Key Construction National ``985" Program of China (Grant No: WF220426001).
\end{acknowledgements}

% Authors must disclose all relationships or interests that 
% could have direct or potential influence or impart bias on 
% the work: 
%
\section*{Conflict of interest}
The authors declare that they have no conflict of interest.

% BibTeX users please use one of
%\bibliographystyle{spbasic}      % basic style, author-year citations
\bibliographystyle{spmpsci}      % mathematics and physical sciences
\bibliography{reference}   % name your BibTeX data base

\end{document}